\newtheorem{theorem}{Theorem}[section]
\newtheorem{lemma}[theorem]{Lemma}
\newtheorem{corollary}{Corollary}[theorem]
\newtheorem{definition}{Definition}[section]
\title{Maximum Size of a Family of Pairwise Graph-Different Permutations}
\author{Louis Golowich\thanks{MIT PRIMES, Department of Mathematics, Massachusetts Institute of Technology.}, 
Chiheon Kim\thanks{Department of Mathematics,
Massachusetts Institute of Technology,
Cambridge, MA 02139-4307.
Email: {\tt chiheonk@math.mit.edu}.}, 
Richard Zhou$^*$}
\date{\today}							% Activate to display a given date or no date
\begin{document}
\maketitle
\begin{abstract}
\normalsize
Two permutations of the vertices of a graph $G$ are called $G$-different if there exists an index $i$ such that $i$-th entry of the two permutations form an edge in $G$. We bound or determine the maximum size of a family of pairwise $G$-different permutations for various graphs $G$. 
%This problem is motivated by its relation to the Shannon capacity of a graph. 
We show that for all balanced bipartite graphs $G$ of order $n$ with minimum degree $n/2 - o(n)$, the maximum number of pairwise $G$-different permutations of the vertices of $G$ is $2^{(1-o(1))n}$. We also present examples of bipartite graphs $G$ with maximum degree $O(\log n)$ that have this property. We explore the problem of bounding the maximum size of a family of pairwise graph-different permutations when an unlimited number of disjoint vertices is added to a given graph. We determine this exact value for the graph of 2 disjoint edges, and present some asymptotic bounds relating to this value for graphs consisting of the union of $n/2$ disjoint edges.
\end{abstract}
%\newpage

\section{Introduction}
For any graph $G$, let two permutations of the vertices of $G$ be $G$-{\it different} if there exists some index $i$ such that the $i$-th entry of the two permutations form an edge in $G$. Let $F(G)$ be the maximum size of a family of pairwise $G$-different permutations of the vertices of $G$.
%The motivation for this definition stems from the Shannon capacity of a graph \cite{shannon_zero_1956, lovasz_shannon_1979}.
The value of $F(G)$ has been studied for many graphs $G$. One of the most studied such graphs is the path on $n$ vertices $P_n$ (pairs of $P_n$-different permutations are also called {\it colliding} permutations in \cite{korner_pairwise_2006}). K\"{o}rner and Malvenuto \cite{korner_pairwise_2006} conjectured that $F(P_n) = {n \choose \lfloor n/2 \rfloor}$. The authors' results implied that 
\begin{equation}
\label{eq:know_upper_bound}
F(G) \leq {n \choose \lfloor n/2 \rfloor}
\end{equation}
for all $n$-vertex balanced bipartite graphs $G$, and $F(K_{\lfloor n/2 \rfloor, \lceil n/2 \rceil}) = {n \choose \lfloor n/2 \rfloor}$, where $K_{\lfloor n/2 \rfloor, \lceil n/2 \rceil}$ is the complete balanced bipartite graph on $n$ vertices. The current asymptotic bounds on $F(P_n)$ stand at $$1.81 \leq \lim_{n \rightarrow \infty} (F(P_n))^{1/n} \leq 2;$$ the lower bound was shown in \cite{brightwell_permutation_2010}.

It is conjectured that $F(P_n) = F(K_{\lfloor n/2 \rfloor, \lceil n/2 \rceil})$, which is surprising; the complete balanced bipartite graph has many more edges than the path, which is one of the sparsest connected balanced bipartite graphs. Therefore we investigate $F(G)$ for balanced bipartite graphs $G$ more dense than the path but less dense than the complete balanced bipartite graph.

In this paper, we present new bounds on $F(G)$ for various bipartite graphs $G$, thereby potentially making progress towards determining this value for the path. We show that for all dense enough $n$-vertex balanced bipartite graphs $G$, $F(G)$ is near $F(K_{\lfloor n/2 \rfloor, \lceil n/2 \rceil})$. We also present a smaller family of much sparser bipartite graphs, which have average degree $O(\log n)$, for which this growth holds. In comparison, the path graph has average degree approximately 2. We investigate the properties of families of pairwise graph-different permutations where arbitrarily many disjoint vertices are added to a graph. We develop new methods for bounding this quantity for the matching graph and determine its exact value for the 4-vertex matching graph (the graph of 2 independent edges).

In related work, K\"{o}rner, Malvenuto, and Simonyi \cite{korner_graph-different_2008} bounded $F(G)$ for various graphs $G$ with arbitrarily many isolated vertices, and completely determined this value for stars. Cohen and Malvenuto \cite{cohen_cyclic_2013} presented bounds on $F(C_n)$, where $C_n$ denotes the $n$-vertex cycle. Their bounds are similar to the current bounds on $F(P_n)$. K\"{o}rner, Simonyi, and Sinaimeri \cite{korner_types_2009} investigated distance graphs, as well as specific graphs $G$ with $n$ vertices such that $F(G)$ does not grow exponentially with $n$, in contrast to the majority of the results in this field. Frankl and Deza \cite{frankl_maximum_1977} looked at a slightly different problem, in which they bounded the maximum number of pairwise $t$-intersecting permutations, where two permutations are $t$-intersecting if they share $t$ common positions.

The organization of this paper is as follows. In Section \ref{sec:dense_bipartite}, we present classes of balanced bipartite graphs for which $F$ is near the upper bound given in (\ref{eq:know_upper_bound}). In Section \ref{sec:sparse_bipartite}, we investigate the properties of families of pairwise matching-different permutations. We present implications and potential future extensions of our work in Section \ref{sec:conclusion}.

\section{Dense Bipartite Graphs: Lower Bounds}
\label{sec:dense_bipartite}
%Outline:
%\begin{itemize}
%\item Present 2-variable induction.
%\item Present union of disjoint complete bipartite graphs.
%\end{itemize}

In Section \ref{subsec:max_deg_comp}, we present lower bounds on $F$ for $n$-vertex bipartite graphs with three different maximum degrees of the bipartite complement; namely, $1$, any positive constant, and $o(n)$. In Section \ref{subsec:union_disjoint}, we present lower bounds for the graph consisting of the union of small disjoint balanced bipartite graphs.

\subsection{Bipartite Graphs with Specified Maximum Degree of Complement}
\label{subsec:max_deg_comp}
It was shown in \cite{korner_pairwise_2006} that $F(P_n) \leq {n \choose \lfloor n/2 \rfloor}$; from their proof it immediately follows that $F(K_{a, n-a}) = {n \choose a}$. The following trivial lemma shows that this upper bound applies to all bipartite graphs.
\begin{lemma}
\label{lem:subgraph_is_less}
If $G$ is a subgraph of $H$, then $F(G) \leq F(H)$.
\end{lemma}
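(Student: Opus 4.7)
The plan is to exhibit explicitly a family of pairwise $H$-different permutations of $V(H)$ whose size equals $F(G)$, by starting from an optimal family for $G$ and transferring it to $H$. Let $\mathcal{F}$ be a maximum family of pairwise $G$-different permutations of $V(G)$, so $|\mathcal{F}| = F(G)$. The core observation is that the property of two permutations being graph-different depends only on \emph{some} index $i$ producing an edge, and enlarging the edge set (or the vertex set, used as a fixed suffix) can only add potential witnesses, never remove existing ones.

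I would split the argument into two cases depending on the ambient vertex sets. If $V(G) = V(H)$, then every $\pi \in \mathcal{F}$ is already a permutation of $V(H)$. For any two $\pi, \sigma \in \mathcal{F}$, the definition of $G$-different gives an index $i$ with $\{\pi(i), \sigma(i)\} \in E(G)$; since $E(G) \subseteq E(H)$, that same index witnesses $H$-difference. Hence $\mathcal{F}$ itself is a family of pairwise $H$-different permutations, and $F(H) \geq |\mathcal{F}| = F(G)$.

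If instead $V(G) \subsetneq V(H)$, fix an arbitrary ordering $v_1, \ldots, v_k$ of $V(H) \setminus V(G)$ and, for each $\pi \in \mathcal{F}$, form $\tilde{\pi}$ by writing out $\pi$ on the first $|V(G)|$ positions and then appending $v_1, \ldots, v_k$. The map $\pi \mapsto \tilde{\pi}$ is injective, and for any $\pi \neq \sigma$ in $\mathcal{F}$, the $G$-difference witness $i \le |V(G)|$ continues to witness $H$-difference of $\tilde{\pi}$ and $\tilde{\sigma}$, again because $E(G) \subseteq E(H)$. Thus $\{\tilde{\pi} : \pi \in \mathcal{F}\}$ is a pairwise $H$-different family of size $F(G)$, so $F(H) \geq F(G)$.

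There is no real obstacle here; the only point requiring a brief comment is the second case, where one must check that the fixed suffix does not interfere (it cannot, since on those positions the permutations agree and no witness is needed there). The lemma is labelled trivial in the text precisely because the set-theoretic inclusions $E(G) \subseteq E(H)$ and $V(G) \subseteq V(H)$ propagate directly through the definition of graph-different.
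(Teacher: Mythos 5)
Your proof is correct and follows essentially the same route as the paper, which simply observes that any pair of $G$-different permutations is $H$-different because $E(G) \subseteq E(H)$. Your additional care in the case $V(G) \subsetneq V(H)$ (appending the extra vertices as a fixed suffix) is a reasonable elaboration of a point the paper leaves implicit, but it does not change the argument.
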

\begin{proof}
Any pair of $G$-different permutations must also be $H$-different by definition; therefore any family of pairwise $G$-different permutations is also pairwise $H$-different.
\end{proof}
\begin{corollary}
\label{cor:bipartite_upper_bound}
If $G$ is subgraph of $K_{a,n-a}$, then $F(G) \leq {n \choose a}$.
\end{corollary}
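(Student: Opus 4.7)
The corollary follows essentially by chaining together two results already at hand, so the proof plan is short. The plan is to observe that the paragraph immediately preceding Lemma~\ref{lem:subgraph_is_less} already records the identity $F(K_{a,n-a}) = \binom{n}{a}$ (attributed to the proof in \cite{korner_pairwise_2006}), and to combine this with the monotonicity statement of Lemma~\ref{lem:subgraph_is_less}.

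Concretely, first I would apply Lemma~\ref{lem:subgraph_is_less} with $H = K_{a,n-a}$: since $G$ is assumed to be a subgraph of $K_{a,n-a}$, this immediately yields
\[
F(G) \leq F(K_{a,n-a}).
\]
Second, I would invoke the equality $F(K_{a,n-a}) = \binom{n}{a}$ cited above, so that the right-hand side becomes $\binom{n}{a}$. Chaining the inequality with the equality gives $F(G) \leq \binom{n}{a}$, which is precisely the claim.

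There is essentially no obstacle here: the content has been extracted into Lemma~\ref{lem:subgraph_is_less}, and the complete bipartite case is quoted from the prior literature. The only thing to double-check is that $G$ and $K_{a,n-a}$ are viewed as having the same vertex set (or that $G$ has exactly $n$ vertices, possibly including isolated ones), so that ``subgraph'' is the right notion and the permutations are over a common ground set of size $n$; this is consistent with the conventions set up earlier in the section.
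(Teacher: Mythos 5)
Your proposal is correct and matches the paper's intended (unwritten) argument exactly: the corollary is meant to follow by combining Lemma~\ref{lem:subgraph_is_less} with the fact $F(K_{a,n-a}) = \binom{n}{a}$ quoted from \cite{korner_pairwise_2006} in the preceding paragraph. Your closing remark about $G$ and $K_{a,n-a}$ sharing the same $n$-vertex ground set is the right detail to check, and it is indeed the convention in force here.
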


Because it is conjectured that the upper bound in Corollary \ref{cor:bipartite_upper_bound} is tight for the $n$-vertex path, it is natural to try to show that this bound is tight for other classes of non-complete bipartite graphs. We present such a class of graphs in the following result.

Let $G(n, a)$ be the graph obtained by removing a maximal matching from $K_{a,n-a}$. (By this definition, $G(n, n/2)$ is the crown graph on $n$ vertices.) We use induction on $n$ and $a$ to determine $F(G(n,a))$ for all $n$ and $a$ in the below theorem.
\begin{theorem}
\label{thm:matching_removed}
For all nonnegative integers $a \leq n$,
\[ F(G(n,a)) =
\begin{cases} 
1 & n < 3 \\
{n \choose a} & n \geq 3. \\
\end{cases}
\]
\end{theorem}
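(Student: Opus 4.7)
The upper bound $F(G(n,a)) \leq \binom{n}{a}$ is immediate from Corollary~\ref{cor:bipartite_upper_bound}, since $G(n,a)$ is a spanning subgraph of $K_{a,n-a}$. For $n < 3$ every $G(n,a)$ is edgeless, so $F(G(n,a)) = 1$; and for $n \geq 3$ with $a \in \{0, n\}$ the graph is again edgeless and $F = 1 = \binom{n}{a}$. The substance of the proof is the matching lower bound $F(G(n,a)) \geq \binom{n}{a}$ for $n \geq 3$ and $1 \leq a \leq n - 1$, which I would prove by induction on $n$. Label the vertices so that $A = \{1, \dots, a\}$ and $B = \{a + 1, \dots, n\}$, and so that the removed matching pairs vertex $i$ with vertex $a + i$ for $1 \leq i \leq \min(a, n - a)$; using the isomorphism $G(n,a) \cong G(n, n - a)$, I may assume $a \leq n - a$.

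The base case $n = 3$ reduces to $a = 1$, for which $G(3, 1)$ consists of the single edge $\{1, 3\}$ together with the isolated vertex $2$. The three cyclic rotations $(1, 2, 3), (2, 3, 1), (3, 1, 2)$ are pairwise $G(3, 1)$-different, since for each pair of rotations some position holds the values $1$ and $3$, giving $F(G(3, 1)) \geq 3 = \binom{3}{1}$.

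For the inductive step with $n \geq 4$, I would construct a family of size $\binom{n}{a}$ by splitting on the vertex placed at position $n$. Let $\mathcal{F}_A$ be the family of all permutations with vertex $1$ at position $n$ whose restrictions to positions $1, \dots, n - 1$ range over a maximum pairwise $G(n - 1, a - 1)$-different family of permutations of $V \setminus \{1\}$. Since $G(n, a) - \{1\}$ is $K_{a - 1, n - a}$ minus a matching of size $a - 1$ (maximal, because $a - 1 \leq n - a$), it is isomorphic to $G(n - 1, a - 1)$, so by the inductive hypothesis $|\mathcal{F}_A| = \binom{n - 1}{a - 1}$. Let $\mathcal{F}_B$ be defined analogously but with a chosen $B$-vertex $w$ at position $n$, where $w = n$ when $n > 2a$ and $w = 2a$ in the crown case $n = 2a$. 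In both subcases $G(n, a) - \{w\}$ is $K_{a, n - a - 1}$ minus a maximal matching, hence isomorphic to $G(n - 1, a)$, giving $|\mathcal{F}_B| = \binom{n - 1}{a}$. Two permutations in the same family are $G(n, a)$-different at some position in $\{1, \dots, n - 1\}$, because the residual graph is a subgraph of $G(n, a)$. Two permutations from opposite families carry values $1$ and $w$ at position $n$, and since $n \geq 4$ implies $w \neq a + 1$ in both subcases, the pair $\{1, w\}$ is an edge of $G(n, a)$, making the two permutations $G(n, a)$-different through position $n$. Pascal's identity $\binom{n - 1}{a - 1} + \binom{n - 1}{a} = \binom{n}{a}$ closes the induction.

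The most delicate point is the choice of $w$ in Case B: one must preserve the ``minus a maximal matching'' structure after deleting $w$ (so the inductive hypothesis applies) while also keeping $\{1, w\}$ an edge of $G(n, a)$ (so the two subfamilies glue together). The split $w = n$ versus $w = 2a$ is forced by the crown subcase $n = 2a$, where no $B$-vertex is unmatched; checking $w \neq a + 1$ in that subcase requires $a \geq 2$, which is precisely why the $n = 3$ base case must be handled separately.
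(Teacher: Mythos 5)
Your proof is correct and follows essentially the same route as the paper's: delete the two endpoints of an edge crossing the bipartition, place the deleted vertex at a fixed position to glue the two subfamilies, and close the induction with Pascal's identity $\binom{n-1}{a-1}+\binom{n-1}{a}=\binom{n}{a}$. The only real difference is that the paper sidesteps your delicate choice of $w$ by noting that deleting either endpoint of \emph{any} edge yields $G(n-1,a-1)$ (resp.\ $G(n-1,a)$) \emph{or a supergraph thereof}, so a maximum family for the smaller graph transfers directly without worrying about preserving maximality of the removed matching.
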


\begin{proof}
If $n < 3$, there are at most 2 vertices in $G(n, a)$, so the graph does not have any edges by definition. Therefore no two permutations are $G(n,a)$-different, so $F(G(n,a)) = 1$. We now assume that $n \geq 3$.
It suffices to show that $F(G(n,a)) \geq {n \choose a}$, as $F(G(n,a)) \leq{n \choose a}$ by Corollary \ref{cor:bipartite_upper_bound}. We prove the result by induction. For the base case, note that for any nonnegative integer $n$, $F(G(n,0)) = F(G(n,n)) = 1 = {n \choose 0} = {n \choose n}$. This is because $G(n,0)$ and $G(n,n)$ both have no edges, so no two permutations are $G(n,0)$-different or $G(n,n)$-different. Additionally, $F(G(3,1)) = F(G(3,2)) = 3 = {3 \choose 1} = {3 \choose 2}$. This is because $G(3,1)$ and $G(3,2)$ each have 3 vertices and 1 edge, so it suffices to show that $F(H) \geq 3$, where $H$ is a graph with vertices labeled 1,2,3 and with an edge between 1 and 2. The 3 permutations
\begin{equation}
\begin{matrix}
1 & 2 & 3 \\
3 & 1 & 2 \\
2 & 3 & 1 \\
\end{matrix}
\nonumber
\end{equation}
are pairwise $H$-different, so $F(H) \geq 3$, and therefore $F(H) = 3$.
For the inductive step, assume $n > 3$, $0 < a < n$, and $F(G(n-1, d)) = {n-1 \choose d}$ for all $0 \leq d \leq n-1$. It follows that $G(n,a)$ is not an empty graph, so let $x$ and $y$ be vertices in the first and second subsets of $G(n,a)$ respectively such that there is an edge between $x$ and $y$. If $x$ is removed from $G(n,a)$, the resulting graph is either $G(n-1,a-1)$ or a supergraph of $G(n-1,a-1)$. Likewise, if $y$ is removed from $G(n,a)$, the resulting graph is either $G(n-1,a)$ or a supergraph of $G(n-1,a)$. Then by the inductive hypothesis, there exists a family of at least $F(G(n-1,a-1))$ permutations of $V(G(n,a)) - \{x\}$ that are pairwise $G(n,a)$-different, and there exists a family of at least $F(G(n-1,a))$ permutations of $V(G(n,a)) - \{y\}$ that are pairwise $G(n,a)$-different; let these families be $\mathcal{F}_x$ and $\mathcal{F}_y$ respectively. Let $\mathcal{F}$ be the family that consists of the union of $x$ concatenated to all elements of $\mathcal{F}_x$ and $y$ concatenated to all elements of $\mathcal{F}_y$. Then $\mathcal{F}$ is pairwise $G(n,a)$-different, so $$F(G(n,a)) \geq |\mathcal{F}| = |\mathcal{F}_x| + |\mathcal{F}_y| = F(G(n-1,a-1)) + F(G(n-1,a)).$$ By this induction, $F(G(n,a)) = {n \choose a}$ for $n \geq 3$, as ${n \choose a} = {n-1 \choose a-1} + {n-1 \choose a}= F(G(n-1,a-1)) + F(G(n-1,a))$.
\end{proof}

Therefore for all $n \geq 3$, there exist non-complete bipartite graphs on $n$ vertices that are subgraphs of $K_{a,n-a}$ for which the upper bound of ${n \choose a}$ is exactly equal to $F$. However, the graphs $G(n,a)$ considered in the above theorem are such that the maximum degrees of their bipartite complements are 1. As the path is much more sparse, we want to extend this result to apply to graphs with larger maximum bipartite complement degree. We make the following definition in order to consider such graphs. %(Note that we are discussing the maximum degree of the bipartite complement graph as opposed to the minimum degree of the actual graph because for unbalanced bipartite graphs, the latter is not very useful; it is bounded above by the number of vertices in the smaller of the two subsets.)
\begin{definition}
Let $F(n, a, \Delta)$ be the minimum value of $F(G)$ over all $n$-vertex bipartite graphs $G$ that are subgraphs of $K_{a,n-a}$, such that the maximum degree of the bipartite complement of $G$ is $\Delta$.
\end{definition}
We can now generalize Theorem \ref{thm:matching_removed} as follows.
\begin{theorem}
\label{thm:3_part_induction}
For all nonnegative integers $n$, $a$, and $\Delta$ such that $n \geq 2 \Delta$ and $\Delta \leq a \leq n - \Delta$, $$F(n, a, \Delta) \geq {n - 2 \Delta \choose a - \Delta}.$$
\end{theorem}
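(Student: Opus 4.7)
The plan is to mirror the inductive construction from Theorem \ref{thm:matching_removed}, but now inducting on $n$ (with $a$ and $\Delta$ carried along). For the base cases, observe that when $n = 2\Delta$, $a = \Delta$, or $a = n - \Delta$, the binomial $\binom{n-2\Delta}{a-\Delta}$ equals $1$, and $F(G) \geq 1$ is automatic for any graph $G$.

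For the inductive step, suppose $n > 2\Delta$ and $\Delta < a < n - \Delta$, and let $G$ be any $n$-vertex graph with parts $A$ (size $a$) and $B$ (size $n-a$) and bipartite-complement max degree $\Delta$. The bipartite complement has at most $a\Delta$ edges, so $G$ has at least $a(n-a) - a\Delta = a(n-a-\Delta) > 0$ edges; pick one, say $\{x,y\}$ with $x\in A$, $y\in B$. Let $\mathcal{F}_x$ be a largest family of pairwise $(G-x)$-different permutations of $V(G)\setminus\{x\}$ and $\mathcal{F}_y$ a largest family of pairwise $(G-y)$-different permutations of $V(G)\setminus\{y\}$. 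Prepending $x$ to each element of $\mathcal{F}_x$ and $y$ to each element of $\mathcal{F}_y$ yields a family of $|\mathcal{F}_x|+|\mathcal{F}_y|$ pairwise $G$-different permutations of $V(G)$: two from the same sub-family share the first coordinate but their suffixes witness $G$-difference, while two from different sub-families are $G$-different at position $1$ via the edge $\{x,y\}$.

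The graphs $G-x$ and $G-y$ have $n-1$ vertices with bipartitions of sizes $(a-1, n-a)$ and $(a, n-a-1)$ respectively, and bipartite-complement max degree at most $\Delta$. Combined with the inductive hypothesis and the monotonicity $F(n', a', \Delta') \geq F(n', a', \Delta)$ for $\Delta' \leq \Delta$ (which follows from Lemma \ref{lem:subgraph_is_less} by removing edges from any such graph until its bipartite complement has max degree exactly $\Delta$, which is feasible because $\Delta \leq a'$ and $\Delta \leq n'-a'$ in the relevant ranges), this gives $F(G-x) \geq \binom{n-1-2\Delta}{a-1-\Delta}$ and $F(G-y) \geq \binom{n-1-2\Delta}{a-\Delta}$. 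Pascal's identity then produces the desired bound $F(G) \geq \binom{n-2\Delta}{a-\Delta}$, completing the induction.

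The main delicate point I expect is this monotonicity bridge: the complement of $G-x$ may have max degree strictly smaller than $\Delta$, so one cannot directly quote the inductive hypothesis without first arguing that $F(n', a', \cdot)$ is non-increasing in its third argument. The alternative to the edge-removal viewpoint is to verify the purely combinatorial inequality $\binom{n-1-2\Delta'}{a-1-\Delta'} \geq \binom{n-1-2\Delta}{a-1-\Delta}$ for $\Delta' \leq \Delta$, which reduces to the elementary fact that $\binom{m}{k} \geq \binom{m-2}{k-1}$. Apart from this bookkeeping, the argument is a clean reprise of the $\Delta=1$ construction in Theorem \ref{thm:matching_removed}.
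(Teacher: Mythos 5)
Your proof is correct and follows essentially the same route as the paper's: induction on $n$ with the trivial base cases at $a=\Delta$ and $a=n-\Delta$, picking an edge $\{x,y\}$ of $G$, prepending $x$ and $y$ to optimal families on $G-\{x\}$ and $G-\{y\}$, and finishing with Pascal's identity. The monotonicity-in-$\Delta$ point you flag is real --- the paper passes silently from ``removing a vertex cannot increase the complement's maximum degree'' (so the degree may drop below $\Delta$) to the quantity $F(n-1,\cdot,\Delta)$, which is defined as a minimum over graphs whose complement has maximum degree exactly $\Delta$ --- and your edge-removal patch via Lemma \ref{lem:subgraph_is_less} is a correct way to close that small gap.
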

\begin{proof}
We show the result by induction on $n$ and $a$, just as in Theorem \ref{thm:matching_removed}. For the base case, it suffices to note the trivial observation that $F(n, \Delta, \Delta) \geq 1$ and $F(n, n - \Delta, \Delta) \geq 1$ for all $n \geq 2 \Delta$.
For the inductive step, let $n > 2 \Delta$. Assume that for all $\Delta \leq d \leq n - 1 - \Delta$, $$F(n-1, d, \Delta) \geq {n - 1 - 2 \Delta \choose d - \Delta}.$$ It remains to be shown that for any $\Delta < a < n - \Delta$, $$F(n, a, \Delta) \geq {n - 2 \Delta \choose a - \Delta}.$$ Let $G$ be any bipartite graph with $n$ vertices that is a subgraph of $K_{a,n-a}$, such that the maximum degree of the bipartite complement of $G$ is $\Delta$. We show that $$F(G) \geq F(n-1, a-1, \Delta) + F(n-1, a, \Delta),$$ as then it would follow that $$F(G) \geq {n - 1 - 2 \Delta \choose a - 1 - \Delta} + {n - 1 - 2 \Delta \choose a - \Delta} = {n - 2 \Delta \choose a - \Delta}$$ by the inductive hypothesis.
First, note that $G$ has more than $2 \Delta$ vertices, so it must have at least one subset with more than $\Delta$ vertices, and therefore, because $\Delta$ is the maximum degree of the bipartite complement graph, $G$ must have at least one edge. Let this edge connect vertices $x$ and $y$ in the first and second subsets respectively. Removing a vertex from a graph cannot increase the maximum degree of the complement graph. Therefore, $F(G - \{x\}) \geq F(n-1, a-1, \Delta)$ and $F(G - \{y\}) \geq F(n-1, a, \Delta)$. It follows by definition that there exist pairwise $G$-different families $\mathcal{F}_x$ and $\mathcal{F}_y$ of permutations of $V(G) - \{x\}$ and $V(G) - \{y\}$ respectively such that $|\mathcal{F}_x| \geq F(n-1, a-1, \Delta)$ and $|\mathcal{F}_y| \geq F(n-1, a, \Delta)$. Let $\mathcal{F}$ be the family of permutations of $V(G)$ consisting of all permutations in $\mathcal{F}_x$ concatenated to $x$ and all permutations in $\mathcal{F}_y$ concatenated to $y$. Then $\mathcal{F}$ is pairwise $G$-different by construction, so $$F(G) \geq |\mathcal{F}| = F(n-1, a-1, \Delta) + F(n-1, a, \Delta).$$
\end{proof}
% Isn't true based on our result; need stronger base case:
%\begin{corollary}
%For any constant nonnegative integer $\Delta$, there exists a constant $s$ such that $$F(n, a, \Delta) \geq s {n \choose a}$$ for all $n$ and $a$.
%\end{corollary}
\begin{corollary}
\label{cor:constant_below_choose}
For all nonnegative integers $n$ and $\Delta$ such that $n \geq 2 \Delta$, $$F(n, \lfloor n/2 \rfloor, \Delta) \geq 2^{-2 \Delta} {n \choose \lfloor n/2 \rfloor}.$$
\end{corollary}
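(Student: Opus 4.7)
The plan is to reduce the corollary directly to Theorem \ref{thm:3_part_induction} applied at $a = \lfloor n/2 \rfloor$, leaving only a standard inequality between binomial coefficients to verify.

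First I would check that the hypotheses of Theorem \ref{thm:3_part_induction} are satisfied at this choice of $a$. Since $n \geq 2\Delta$, we have $\Delta \leq \lfloor n/2 \rfloor \leq n - \Delta$, so the theorem applies and yields
\[
F(n, \lfloor n/2 \rfloor, \Delta) \;\geq\; \binom{n - 2\Delta}{\lfloor n/2 \rfloor - \Delta}.
\]
Because $\Delta$ is an integer, $\lfloor n/2 \rfloor - \Delta = \lfloor (n-2\Delta)/2 \rfloor$, so the right-hand side is the central binomial coefficient of $n - 2\Delta$.

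It therefore remains to establish the purely arithmetic inequality
\[
\binom{n - 2\Delta}{\lfloor (n-2\Delta)/2 \rfloor} \;\geq\; 2^{-2\Delta}\binom{n}{\lfloor n/2 \rfloor}.
\]
The cleanest approach I see is Vandermonde's identity:
\[
\binom{n}{\lfloor n/2 \rfloor} \;=\; \sum_{j=0}^{2\Delta}\binom{2\Delta}{j}\binom{n-2\Delta}{\lfloor n/2 \rfloor - j}.
\]
Each term $\binom{n-2\Delta}{\lfloor n/2 \rfloor - j}$ is bounded above by the central binomial coefficient $\binom{n-2\Delta}{\lfloor (n-2\Delta)/2 \rfloor}$, since the central entry maximizes a row of Pascal's triangle. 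Pulling this maximum out of the sum and using $\sum_j \binom{2\Delta}{j} = 2^{2\Delta}$ gives the desired inequality, and combining with the bound from Theorem \ref{thm:3_part_induction} completes the proof.

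I do not anticipate any real obstacle: the entire argument is a single substitution into the previously proven theorem plus one application of Vandermonde and the unimodality of the binomial coefficients.
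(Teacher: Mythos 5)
Your proposal is correct and follows the same route as the paper: apply Theorem \ref{thm:3_part_induction} at $a = \lfloor n/2 \rfloor$ and then verify that $\binom{n-2\Delta}{\lfloor (n-2\Delta)/2\rfloor} \geq 2^{-2\Delta}\binom{n}{\lfloor n/2\rfloor}$. The paper dismisses that last inequality as ``easy to see by expanding the binomial coefficient,'' whereas your Vandermonde-plus-unimodality argument is a clean and fully rigorous way to supply the missing details; everything checks out, including the hypothesis verification $\Delta \leq \lfloor n/2\rfloor \leq n-\Delta$.
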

\begin{proof}
It is easy to see by expanding the binomial coefficient that 
\begin{align}
\begin{split}
F(n, \lfloor n/2 \rfloor, \Delta) &\geq {n - 2 \Delta \choose \lfloor n/2 \rfloor - \Delta} \\
&\geq 2^{-2 \Delta} {n \choose \lfloor n/2 \rfloor}. \\
\end{split}
\nonumber
\end{align}
\end{proof}

Although the lower bound in Theorem \ref{thm:3_part_induction} does not quite reach the upper bound given in Corollary \ref{cor:bipartite_upper_bound}, it comes within a constant factor of the upper bound for balanced bipartite graphs when $\Delta$ is a constant, as shown in Corollary \ref{cor:constant_below_choose}. This constant factor is due to the difficulty of finding sufficient base cases for the induction on $n$ and $a$ for large $\Delta$. Although for many $\Delta$ better base cases are easily found (as in $\Delta = 1$), it is difficult to find general base cases for all $\Delta$.

Because $\lim_{n \rightarrow \infty} \frac{1}{n} \log_2 {n \choose \lfloor n/2 \rfloor} = 1$ by Stirling's formula, the function $F(K_{\lfloor n/2 \rfloor, \lceil n/2 \rceil}) = {n \choose \lfloor n/2 \rfloor}$ grows exponentially on the order of $2^n$. We therefore remain primarily interested in showing that $F$ grows on the order of $2^n$ for various classes of balanced bipartite graphs, and thereby showing that the upper bound on $F$ of ${n \choose \lfloor n/2 \rfloor}$ is met asymptotically. We now use Corollary \ref{cor:constant_below_choose} to show that $F(n, \lfloor n/2 \rfloor, \Delta)$ grows on the order of $2^n$ if $\Delta = o(n)$.

\begin{theorem}
\label{thm:big_deg_comp}
$$\lim_{n \rightarrow \infty} \frac{1}{n} \log_2 F(n, \lfloor n/2 \rfloor, o(n)) = 1.$$
\end{theorem}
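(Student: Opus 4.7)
The plan is to sandwich $\frac{1}{n}\log_2 F(n,\lfloor n/2\rfloor, \Delta)$ between two quantities that both tend to $1$, where $\Delta=\Delta(n)=o(n)$. The upper bound is essentially free, and the lower bound is an immediate consequence of Corollary \ref{cor:constant_below_choose} combined with Stirling's formula. So this theorem is really a corollary packaged as an asymptotic statement; no new combinatorial ideas are needed.

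First I would handle the upper bound. Any graph counted by $F(n,\lfloor n/2\rfloor, \Delta)$ is by definition a subgraph of $K_{\lfloor n/2\rfloor, \lceil n/2 \rceil}$, so Corollary \ref{cor:bipartite_upper_bound} gives
\[
F(n,\lfloor n/2\rfloor, \Delta) \;\leq\; \binom{n}{\lfloor n/2 \rfloor}.
\]
Taking $\tfrac{1}{n}\log_2$ and invoking the standard estimate $\tfrac{1}{n}\log_2 \binom{n}{\lfloor n/2 \rfloor} \to 1$ (Stirling) gives $\limsup_{n\to\infty} \tfrac{1}{n}\log_2 F(n,\lfloor n/2\rfloor, \Delta) \leq 1$.

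Next I would handle the lower bound using Corollary \ref{cor:constant_below_choose}, which states that
\[
F(n,\lfloor n/2\rfloor, \Delta) \;\geq\; 2^{-2\Delta}\binom{n}{\lfloor n/2 \rfloor}
\]
whenever $n \geq 2\Delta$. Taking $\tfrac{1}{n}\log_2$ on both sides yields
\[
\frac{1}{n}\log_2 F(n,\lfloor n/2\rfloor, \Delta) \;\geq\; -\frac{2\Delta}{n} + \frac{1}{n}\log_2 \binom{n}{\lfloor n/2 \rfloor}.
\]
Since $\Delta = o(n)$ by hypothesis, $\tfrac{2\Delta}{n}\to 0$, and again Stirling gives $\tfrac{1}{n}\log_2\binom{n}{\lfloor n/2\rfloor}\to 1$, so the right-hand side tends to $1$. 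Combining with the upper bound yields the claimed limit.

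There is no real obstacle here; the only mild care is in reading the notation $F(n,\lfloor n/2\rfloor, o(n))$ properly as the infimum (or just value) taken over any sequence $\Delta(n)=o(n)$, and verifying that Corollary \ref{cor:constant_below_choose} applies for all large enough $n$ (which it does because $n \geq 2\Delta(n)$ eventually). The entire argument is a few lines once the two earlier corollaries are in hand.
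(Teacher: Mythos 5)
Your argument is correct and matches the paper's proof essentially step for step: the upper bound comes from Corollary \ref{cor:bipartite_upper_bound} (equivalently, comparison with $F(K_{\lfloor n/2 \rfloor, \lceil n/2 \rceil})$) together with Stirling, and the lower bound comes from Corollary \ref{cor:constant_below_choose}, with the factor $2^{-2\Delta}$ contributing only $-2\Delta/n \to 0$ to the normalized logarithm because $\Delta = o(n)$. No further comment is needed.
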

\begin{proof}
First note that 
\begin{align}
\begin{split}
\lim_{n \rightarrow \infty} \frac{1}{n} \log_2 F(n, \lfloor n/2 \rfloor, o(n))
&\leq \lim_{n \rightarrow \infty} \frac{1}{n} \log_2 F(K_{\lfloor n/2 \rfloor, \lceil n/2 \rceil}) = 1\\
%&= \lim_{n \rightarrow \infty} \frac{1}{n} \log_2 {n \choose \lfloor n/2 \rfloor} \\
%&= 1\\
\end{split}
\nonumber
\end{align}
by Stirling's formula. It therefore suffices to show the opposite inequality to prove a lower bound of 1. By Corollary \ref{cor:constant_below_choose}, 
\begin{align}
\begin{split}
\lim_{n \rightarrow \infty} \frac{1}{n} \log_2 F(n, \lfloor n/2 \rfloor, o(n))
&\geq \lim_{n \rightarrow \infty} \frac{1}{n} \log_2 \left( 2^{-2 \cdot o(n)} {n \choose \lfloor n/2 \rfloor} \right) \\
&\geq \lim_{n \rightarrow \infty} \frac{1}{n} \log_2 {n \choose \lfloor n/2 \rfloor} + \lim_{n \rightarrow \infty} \frac{1}{n} \log_2 2^{-2 \cdot o(n)} \\
&= 1 - \lim_{n \rightarrow \infty} \frac{2 \cdot o(n)}{n} \\
&= 1. \\
\end{split}
\nonumber
\end{align}
\end{proof}
This theorem is particularly interesting because it presents a very large class of graphs such that any graph $G$ in this class has the property that $F(G)$ is near $2^n$. However, as $\Delta = o(n)$, these graphs are relatively dense. In the next section we present specific but much sparser graphs $G$ for which $F(G)$ is near $2^n$.

\subsection{Union of Small Dense Balanced Bipartite Graphs}
\label{subsec:union_disjoint}
In this section we show that $F$ grows on the order of $2^n$ for graphs consisting of the union of small complete balanced bipartite graphs. 
We first present the following well-known lemma, which provides a method for placing lower bounds on $F(G)$ for graphs $G$ composed of disjoint subgraphs. An equivalent result is shown in \cite{korner_graph-different_2008}, but we present the proof as it is related to future proofs in this paper.

\begin{lemma}
\label{lem:concatenation}
Let $G$ be the union of disjoint graphs $G_1$ and $G_2$. Then $F(G) \geq F(G_1) \cdot F(G_2)$.
\end{lemma}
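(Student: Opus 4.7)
The plan is to prove the bound by explicit construction: produce a pairwise $G$-different family of permutations of $V(G)$ whose size is exactly $F(G_1) \cdot F(G_2)$. First I would fix optimal families $\mathcal{F}_1$ of pairwise $G_1$-different permutations of $V(G_1)$ and $\mathcal{F}_2$ of pairwise $G_2$-different permutations of $V(G_2)$, with $|\mathcal{F}_1| = F(G_1)$ and $|\mathcal{F}_2| = F(G_2)$. Using that $V(G) = V(G_1) \sqcup V(G_2)$ (since $G_1$ and $G_2$ are disjoint), I would then form the family
\[
\mathcal{F} = \{\, \sigma_1 \sigma_2 : \sigma_1 \in \mathcal{F}_1,\ \sigma_2 \in \mathcal{F}_2 \,\}
\]
of permutations of $V(G)$, where $\sigma_1 \sigma_2$ denotes the concatenation that lists the vertices of $V(G_1)$ in the order given by $\sigma_1$ followed by the vertices of $V(G_2)$ in the order given by $\sigma_2$. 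Distinct pairs $(\sigma_1, \sigma_2)$ yield distinct concatenations (the first $|V(G_1)|$ entries recover $\sigma_1$ and the last $|V(G_2)|$ entries recover $\sigma_2$), so $|\mathcal{F}| = F(G_1) \cdot F(G_2)$.

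The key step is to verify that $\mathcal{F}$ is pairwise $G$-different. I would take two distinct elements $\sigma_1\sigma_2, \tau_1\tau_2 \in \mathcal{F}$ and split into two cases. If $\sigma_1 \neq \tau_1$, then since $\mathcal{F}_1$ is pairwise $G_1$-different there exists an index $i \leq |V(G_1)|$ such that the $i$-th entries of $\sigma_1$ and $\tau_1$ form an edge of $G_1$; this edge lies in $G$ as well, and the $i$-th entries of $\sigma_1\sigma_2$ and $\tau_1\tau_2$ are exactly those entries. If instead $\sigma_1 = \tau_1$, then $\sigma_2 \neq \tau_2$, so there is an index $j$ such that the $j$-th entries of $\sigma_2$ and $\tau_2$ form an edge of $G_2 \subseteq G$, and this edge appears at position $|V(G_1)| + j$ in the concatenations. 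Either way we obtain an index witnessing $G$-differentness, establishing $F(G) \geq |\mathcal{F}| = F(G_1)\cdot F(G_2)$.

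No step here is a real obstacle; the entire argument is structural. The only point worth stating carefully is that an edge of $G_i$ is automatically an edge of $G$, which relies on $G$ being the \emph{union} of $G_1$ and $G_2$ (not merely containing their vertex sets), together with the disjointness of the vertex sets so that the two halves of the concatenation do not interact. This same concatenation template is the one reused in the inductive constructions of Theorems \ref{thm:matching_removed} and \ref{thm:3_part_induction}, which is the main reason to include the explicit proof here.
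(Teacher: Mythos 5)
Your proposal is correct and matches the paper's own proof essentially exactly: both construct the family of all concatenations $\sigma_1\sigma_2$ with $\sigma_1$ from an optimal $G_1$-different family and $\sigma_2$ from an optimal $G_2$-different family, and verify pairwise $G$-differentness by the same two-case argument. Your write-up is slightly more explicit about why the concatenations are distinct and why edges of $G_i$ are edges of $G$, but there is no substantive difference.
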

\begin{proof}
Let $\mathcal{F}_1 = \{\pi_1, \dots, \pi_{F(G_1)}\}$ and $\mathcal{F}_2 = \{\sigma_1, \dots, \sigma_{F(G_2)}\}$ be families of pairwise $G_1$-different and $G_2$-different permutations respectively of maximum size, so that $|\mathcal{F}_1| = F(G_1)$ and $|\mathcal{F}_2| = F(G_2)$. Then let $\mathcal{F}$ be the family of permutations consisting of $\pi_i$ concatenated to $\sigma_j$ for all $1 \leq i \leq F(G_1)$ and $1 \leq j \leq F(G_2)$. Then $\mathcal{F}$ is $G$-different, as for any two permutations $\pi_{i_1} \sigma_{j_1}$ and $\pi_{i_2} \sigma_{j_2}$, if $i_1 \neq i_2$, $\pi_{i_1}$ and $\pi_{i_2}$ are $G$-different; otherwise $j_1 \neq j_2$ and $\sigma_{j_1}$ and $\sigma_{j_2}$ are $G$-different. Therefore $F(G) \geq |\mathcal{F}| = F(G_1) \cdot F(G_2)$.
\end{proof}

Intuitively, if $F(G_1) \approx 2^{|V(G_1)|}$ and $F(G_2) \approx 2^{|V(G_2)|}$, then $F(G_1 + G_2) \approx 2^{|V(G_1)| + |V(G_2)|} = 2^{|V(G_1 + G_2)|}$ by Lemma \ref{lem:concatenation}. Because we want to find bipartite graphs $G$ for which $F(G) \approx 2^{|V(G)|}$, this idea is very useful, and forms the basis of the theorem below.
\begin{theorem}
\label{thm:disjoint_union}
Let $B(n,k(n))$ be the balanced bipartite graph of order $n$ consisting of the union of $k(n)$ disjoint balanced complete bipartite graphs, each of order $\lfloor n/k(n) \rfloor$ or $\lceil n/k(n) \rceil$. If $$k(n) = O \left( \frac{n}{\log_2 n} \right) ,$$ then $$\lim_{n \rightarrow \infty} \frac{1}{n} \log_2 F(B(n,k(n))) = 1.$$
\end{theorem}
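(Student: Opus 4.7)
The plan is to match an easy upper bound against a lower bound that comes from iterating the concatenation lemma over the $k(n)$ disjoint complete bipartite components. For the upper bound, since $B(n,k(n))$ is a balanced bipartite graph on $n$ vertices, Corollary \ref{cor:bipartite_upper_bound} gives $F(B(n,k(n))) \leq \binom{n}{\lfloor n/2\rfloor}$, so Stirling's formula immediately yields $\limsup_{n\to\infty} \tfrac{1}{n}\log_2 F(B(n,k(n))) \leq 1$.

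For the lower bound, I would apply Lemma \ref{lem:concatenation} inductively $k(n)-1$ times across the disjoint components of $B(n,k(n))$, producing
\[
F(B(n,k(n))) \;\geq\; \prod_{i=1}^{k(n)} F\bigl(K_{\lfloor s_i/2\rfloor,\,\lceil s_i/2\rceil}\bigr) \;=\; \prod_{i=1}^{k(n)} \binom{s_i}{\lfloor s_i/2 \rfloor},
\]
where each $s_i \in \{\lfloor n/k(n)\rfloor, \lceil n/k(n)\rceil\}$, $\sum_i s_i = n$, and the equality uses the formula $F(K_{a,n-a}) = \binom{n}{a}$ recalled at the start of Section \ref{subsec:max_deg_comp}. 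Now substitute the elementary estimate $\binom{s}{\lfloor s/2\rfloor} \geq 2^s/(s+1)$ (since the largest of the $s+1$ binomial coefficients summing to $2^s$ is the central one) to obtain
\[
F(B(n,k(n))) \;\geq\; \frac{2^n}{\prod_i (s_i+1)} \;\geq\; \frac{2^n}{(\lceil n/k(n)\rceil + 1)^{k(n)}}.
\]

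Taking $\log_2$ and dividing by $n$ gives
\[
\frac{1}{n}\log_2 F(B(n,k(n))) \;\geq\; 1 - \frac{k(n)}{n}\log_2\!\bigl(\lceil n/k(n)\rceil + 1\bigr),
\]
so it remains to show the error term tends to $0$. Setting $t = k(n)/n$, the error is essentially $-t\log_2 t$, the binary entropy-like function; since $k(n) = O(n/\log_2 n)$ forces $t \to 0$, and since $-t\log_2 t \to 0$ as $t \to 0^+$, the error vanishes and the lower limit is at least $1$. Together with the upper bound, this gives the claimed limit of $1$.

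The only delicate point, which I expect to be the main obstacle, is the last asymptotic step: the naive bound $\log_2(n/k) \leq \log_2 n$ combined with $k/n \leq c/\log_2 n$ yields only a spurious $O(1)$ error, so one must instead invoke continuity of $h(t) = -t\log_2 t$ at $0$ (equivalently, use the precise monotonicity of $h$ on $(0, 1/e)$) to extract that the error is in fact $O(\log\log n / \log n)$. Everything else is a routine packaging of earlier results in the paper.
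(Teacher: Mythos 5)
Your proof is correct and follows essentially the same route as the paper's: decompose via Lemma \ref{lem:concatenation}, lower-bound each central binomial coefficient by $2^{s}$ divided by a polynomial factor, and verify that the resulting error term $\tfrac{k}{n}\log_2\tfrac{n}{k}$ vanishes when $k(n)=O(n/\log_2 n)$. The only cosmetic differences are that you use the elementary estimate $\binom{s}{\lfloor s/2\rfloor}\ge 2^s/(s+1)$ in place of the paper's Stirling bound $l\cdot 2^s/\sqrt{s}$, and that you bound each component order directly by $\lceil n/k(n)\rceil$ rather than invoking AM--GM (which the paper uses only to cover components of arbitrary orders); your closing remark about why the naive bound $\log_2(n/k)\le\log_2 n$ is insufficient correctly makes explicit a step the paper dismisses as ``easily verifiable.''
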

\begin{proof}
For some given $n$, let $k = k(n)$, $B = B(n,k(n))$, and let the $k$ disjoint subgraphs of $B$ be $B_1, \dots, B_k$ with orders $n_1, \dots, n_k$ respectively. By Lemma \ref{lem:concatenation}, $$F(B) \geq \prod_{i=1}^k F(B_i) = \prod_{i = 1}^{k} {n_i \choose \lfloor n_i / 2 \rfloor}.$$ The right side can be expanded by Stirling's formula, which is easily applied to show that there exists a positive constant $l$ for which $${x \choose \lfloor x/2 \rfloor} \geq l \cdot \frac{2^x}{\sqrt{x}}$$ holds for all positive integers $x$. (The actual value of $l$ is not relevant to us, but it is easily bounded.) Then, as $\sum_{i=1}^{k} n_i = n$, and by the AM-GM inequality,
\begin{align}
\label{eq:union_lower_bound}
\begin{split}
F(B) &\geq \prod_{i = 1}^{k} {n_i \choose \lfloor n_i / 2 \rfloor}
\geq \prod_{i = 1}^{k} l \cdot \frac{2^{n_i}}{\sqrt{n_i}} 
= \frac{l^k \cdot 2^n}{\sqrt{\prod_{i = 1}^{k} n_i}} \\
&\geq \frac{l^k \cdot 2^n}{\sqrt{\left( \frac{n}{k} \right)^k}}
= \frac{l^k \cdot 2^n}{2^{\frac{k}{2} \log_2 \left( \frac{n}{k} \right)}} \\
&=  l^k \cdot 2^{n - \frac{k}{2} \log_2 \left( \frac{n}{k} \right)}. \\
\end{split}
\nonumber
\end{align}
Therefore 
\begin{equation}
\label{eq:disjoint_result}
F(B(n,k(n))) \geq l^{k(n)} \cdot 2^{n - \frac{k(n)}{2} \log_2 \left( \frac{n}{k(n)} \right)}.
\end{equation}
If $k(n) = O(n / \log_2 n)$, it is easily verifiable from (\ref{eq:disjoint_result}) that $$\lim_{n \rightarrow \infty} \frac{1}{n} \log_2 F(B(n,k(n))) \geq 1.$$ The opposite inequality is trivial as $B(n,k(n))$ is bipartite.
\end{proof}

Note that the proof of Theorem \ref{thm:disjoint_union} holds even if the $k(n)$ disjoint balanced complete bipartite graphs have different orders; however, the union is sparsest when the disjoint subgraphs are near in order.

Theorem \ref{thm:disjoint_union} provides an $n$-vertex bipartite graph $B(n,k(n))$ with maximum degree $O(\log_2 n)$ such that $F(B(n,k(n)))$ grows on the order of $2^n$, or more formally, such that $F(B(n,k(n))) = 2^{(1-o(1))n}$. This graph is the sparest balanced bipartite graph we currently know with this property.

\section{Families of Pairwise Matching-Different Permutations}
\label{sec:sparse_bipartite}
%Outline:
%\begin{itemize}
%\item Introduce matching, explain why it is interesting and relevant.
%\item Present best upper bounds on independent set number.
%\item Present partial result of reducing independent set number by almost 1/4 for each pair of vertices added.
%\end{itemize}

In Section \ref{sec:dense_bipartite}, we primarily dealt with relatively dense bipartite graphs $G$ such that $F(G)$ was near $2^n$. Now we examine very sparse bipartite graphs. The most prominently studied of these is the path; improvements on the lower bound on $F(P_n)$ were made in \cite{korner_pairwise_2006, korner_graph-different_2008, brightwell_permutation_2010}. In this paper we investigate $F$ for the matching graph on $n$ vertices, which we denote $M(n)$. (We will assume $n$ to be even whenever referencing $M(n)$ in this section.) As the matching is a subgraph of the path, any lower bounds on $F(M(n))$ would also apply to the $n$-vertex path. Additionally, the matching consists of the union of $n/2$ disjoint edges, giving it a special structure relating to Lemma \ref{lem:concatenation}.

We first generalize the function $F$ and show how the generalization is related to the original function.
\begin{definition}
\label{def:blank_spaces}
Let $F_b (G)$ be the maximum size of a family of pairwise $G$-different permutations of the vertices of $G$ with an additional $b$ blank spaces.
\end{definition}
Here a blank space can be thought of as an isolated vertex added to $G$. For example, consider the family $\mathcal{F}$ shown below.
\begin{equation}
\begin{matrix}
1 & 2 & * \\
* & 1 & 2 \\
2 & * & 1 \\
\end{matrix}
\nonumber
\end{equation}
We say $\mathcal{F}$ is family of 3  pairwise $M(2)$-different permutations of the vertices of $M(2)$ with 1 blank space; the blank space in each permutation is denoted by \lq$*$' and simply serves as a placeholder. By this definition, it is clear that $F_b (G) \leq F_c (G)$ if $b \leq c$ for any graph $G$. We now extend Definition \ref{def:blank_spaces} to account for families of permutations with unlimited blank spaces; an equivalent definition was made in \cite{korner_graph-different_2008}.
\begin{definition}
For any graph $G$, assign each element of $V(G)$ to a unique natural number. Let two infinite permutations of $\mathbb{N}$ be $G$-different if at some position their corresponding elements are both assigned to vertices in $G$ and form an edge in $G$. Then let $F_{\infty}(G)$ be the maximum size of a family of pairwise $G$-different infinite permutations of $\mathbb{N}$.
\end{definition}
K\"{o}rner, Malvenuto, and Simonyi \cite{korner_graph-different_2008} showed that for any graph $G$ on $n$ vertices, 
\begin{equation}
\label{eq:chromatic_upper_bound}
F_{\infty} (G) \leq (\chi(G))^{n},
\end{equation}
where $\chi(G)$ denotes the chromatic number of $G$. Therefore, for graphs with finitely many vertices, $F_{\infty} (G)$ is finite, so it follows that there exists a sufficiently large constant $b$ for which $F_b (G) = F_{\infty} (G)$. (For example, it is easy to verify that $b = n (\chi(G))^n$ satisfies this equation.) We can therefore think of $F_{\infty} (G)$ as the maximum size of a family of pairwise $G$-different permutations of the vertices of $G$ with arbitrarily many blank spaces, rather than in terms of infinite permutations of the natural numbers.

If $G(n)$ is a graph defined for all positive integer $n$, then let $$\rho_b(G) = \limsup_{n \rightarrow \infty} \frac{1}{n} \log_2 F_b(G(n)),$$ and let $\rho(G) = \rho_0(G)$. (In this paper $G(n)$ will usually be the first $n$ vertices of an infinite graph, as is the case with $M(n)$.) Therefore $\rho_b (G)$ measures the asymptotic behavior of $F_b(G(n))$. Although it may seem that $F_{\infty} (G(n))$ should be much larger than $F(G(n))$, the following two lemmas show that for certain graphs $G(n)$ such as the matching graph $M(n)$, $\rho(G)$ and $\rho_{\infty} (G)$ are equal. Very similar results were shown in \cite{korner_graph-different_2008, brightwell_permutation_2010} for the path; we present a generalization of these proofs below.

\begin{definition}
If $\mathcal{F}$ is a family of permutations of the vertices of some graph $G$ with any number of blank spaces (or with no blank spaces), let $H_{\mathcal{F}, G}$ be the graph whose vertices are the permutations in $\mathcal{F}$ and whose edges are all pairs of permutations $\sigma, \pi \in \mathcal{F}$ such that $\sigma$ and $\pi$ are $G$-different.
\end{definition}
Note that if $\mathcal{F}$ is pairwise $G$-different, then $H_{\mathcal{F}, G}$ is complete, or equivalently, $\alpha(H_{\mathcal{F}, G}) = 1$.

\begin{lemma}
\label{lem:recursion_no_blanks}
If $\mathcal{F}$ is a family of $G_0$-different permutations of the vertices of $G_0$ with unlimited blank spaces, then $\rho(G) \geq \frac{1}{|V(G_0)|} \log_2 |\mathcal{F}|$, where $G(n)$ consists of the union of $n/|V(G_0)|$ copies of $G_0$, for all $n$ which are divisible by $|V(G_0)|$.
\end{lemma}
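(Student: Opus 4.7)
The plan is to construct, for each $N$, a pairwise $G(Nm)$-different family of permutations of $V(G(Nm))$ of size essentially $|\mathcal{F}|^N$ (where $m = |V(G_0)|$), and then take $\limsup \frac{1}{Nm} \log_2 |\mathcal{F}|^N = \frac{1}{m} \log_2 |\mathcal{F}|$ to obtain the claimed bound on $\rho(G)$.

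First I would pin down $\mathcal{F}$ concretely: since $F_\infty(G_0) \leq \chi(G_0)^m$ is finite by (\ref{eq:chromatic_upper_bound}), there is a finite $b$ with $F_b(G_0) = F_\infty(G_0) \geq |\mathcal{F}|$, so I may assume every $\pi \in \mathcal{F}$ is a string of length $L = m + b$ with exactly $m$ vertex symbols of $G_0$ and $b$ blanks. Label the $N$ copies of $G_0$ in $G(Nm)$ with vertex sets $V_1, \ldots, V_N$, and for each $\pi \in \mathcal{F}$ let $\pi^{(i)}$ denote the relabelling of $\pi$ that sends the $j$-th vertex of $G_0$ to the $j$-th vertex of $V_i$. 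For each tuple $(\pi_1, \ldots, \pi_N) \in \mathcal{F}^N$ I would stack the strings $\pi_i^{(i)}$ as rows of an $N \times L$ array and extract a permutation of $V(G(Nm))$ by reading the array in some canonical order while skipping blank cells.

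The correctness target is that if $(\pi_1, \ldots, \pi_N) \neq (\pi_1', \ldots, \pi_N')$ and $i^*$ is the smallest differing index, then the first $i^* - 1$ rows of the two arrays agree, and the $G_0$-difference of $\pi_{i^*}$ and $\pi_{i^*}'$ supplies an edge-forming column that, after extraction, lands at a common output position in both permutations inside the same copy $V_{i^*}$, realizing an edge of $G(Nm)$ as required. The technically clean instance is the subfamily $\mathcal{F}' \subseteq \mathcal{F}$ of permutations sharing a common blank pattern; by pigeonhole $|\mathcal{F}'| \geq |\mathcal{F}|/\binom{L}{b}$, and for such permutations the row-major concatenation $\pi_1^{(1)} \cdots \pi_N^{(N)}$ with the fixed blanks stripped from every row realizes the construction directly and reduces the problem to Lemma \ref{lem:concatenation}, giving $F(G(Nm)) \geq (|\mathcal{F}|/\binom{L}{b})^N$.

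The main obstacle is that the pigeonhole loss $\binom{L}{b}$ leaves a constant-in-$N$ deficit in the $\limsup$, yielding only $\rho(G) \geq \frac{1}{m}\log_2(|\mathcal{F}|/\binom{L}{b})$ rather than the claimed $\frac{1}{m}\log_2|\mathcal{F}|$. Closing this gap is the hard part: I would seek an extraction rule that uses all of $\mathcal{F}^N$ while still synchronizing the edge-forming column across tuples with different blank supports, for instance by interleaving rows according to a fixed tuple-independent schedule that treats each blank cell as a positional marker so that the edge cell $(i^*, c)$ always lands at the same output index in both extracted permutations regardless of the blank patterns of rows $i \geq i^*$. Verifying that such a schedule also forces distinctness of the output permutations (so that the resulting family genuinely has size $|\mathcal{F}|^N$) is where I expect the most care to be required.
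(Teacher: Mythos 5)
The paper itself omits the proof of this lemma (deferring to Lemma \ref{lem:concatenation} and to \cite{korner_graph-different_2008}), so you are being judged against the intended argument rather than a written one. Your concatenation step is the right starting point and correctly yields $F_{Nb}(G(Nm)) \geq |\mathcal{F}|^N$, i.e., the bound with the blanks still present. But $\rho(G) = \rho_0(G)$ counts genuine, blank-free permutations of $V(G(Nm))$, and the only complete route you give to blank-free permutations --- pigeonholing $\mathcal{F}$ onto a common blank pattern --- proves only $\rho(G) \geq \frac{1}{m}\log_2\bigl(|\mathcal{F}|/\binom{L}{b}\bigr)$ with $m = |V(G_0)|$. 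You flag this yourself, but it is not a minor deficit: the loss $\frac{1}{m}\log_2\binom{L}{b}$ does not vanish as $N \to \infty$, and the weakened statement fails exactly where the lemma is used (the contradiction in Lemma \ref{lem:blanks_dont_help} needs $\rho(G) \geq \frac{1}{N}\log_2|\mathcal{F}|$ with no slack depending on the number of blanks). Your proposed repair --- a tuple-independent reading schedule treating blank cells as ``positional markers'' --- cannot work as described: the output must be a permutation of exactly the $Nm$ vertices of $G(Nm)$, so no output position is available to serve as a marker, and once blanks are deleted the image of a fixed cell $(i^*,c)$ necessarily depends on how many blanks precede it, which varies with the tuple.

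The missing idea is to fill the blank positions with vertices of \emph{further} copies of $G_0$ and amortize the overhead recursively (this is presumably why the lemma is labelled as a recursion). Filling the $n_2$ blank positions of each member of a $G(n_1)$-different family, in order, with the entries of a member of a $G(n_2)$-different family gives $F(G(n_1+n_2)) \geq F_{n_2}(G(n_1)) \cdot F(G(n_2))$: non-blank entries never move, so collisions from the outer family survive, and for a fixed outer permutation the blank positions coincide, so collisions from the inner family survive. Applying this with $n_1 = Nm$, $n_2 = Nb$ and iterating --- the filler family itself has blanks, which are filled at the next level, the level sizes decaying geometrically --- produces blank-free families of size $|\mathcal{F}|^{N(1 + b/(m+b) + (b/(m+b))^2 + \cdots)} = |\mathcal{F}|^{N(m+b)/m}$ on roughly $N(m+b)$ vertices, whose rate is exactly $\frac{1}{m}\log_2|\mathcal{F}|$. (Equivalently, combine the single inequality $F(G(N(m+b))) \geq |\mathcal{F}|^N \cdot F(G(Nb))$ with the supermultiplicativity of $F(G(\cdot))$ from Lemma \ref{lem:concatenation} and Fekete's lemma.) Without some version of this recursion, your construction establishes only a strictly weaker lemma.
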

We omit the proof of this lemma, which is similar to the proof of Lemma \ref{lem:concatenation}. A nearly identical result is shown in \cite{korner_graph-different_2008}, which is specific to the path but easily generalizable.

\begin{lemma}
\label{lem:blanks_dont_help}
Let $G(n)$ be a graph of order $n$ defined for all positive $n$ such that $G(n_1) + G(n_2)$ is a subgraph of $G(n_1 + n_2)$. If either $\rho(G)$ or $\rho_{\infty} (G)$ exists (that is, either of their limits exist and is not $\infty$), then both values exist and $\rho(G) = \rho_{\infty} (G).$
\end{lemma}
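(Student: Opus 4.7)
The plan is to show the equality of $\rho(G)$ and $\rho_\infty(G)$ in two stages: first promote both $\limsup$'s to genuine limits via Fekete's lemma, then sandwich the two quantities between each other using the construction of Lemma \ref{lem:recursion_no_blanks}.

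First I would derive supermultiplicativity of both sequences. The hypothesis $G(n_1) + G(n_2) \subseteq G(n_1+n_2)$ combined with Lemmas \ref{lem:subgraph_is_less} and \ref{lem:concatenation} gives
\[ F(G(n_1+n_2)) \;\geq\; F(G(n_1)+G(n_2)) \;\geq\; F(G(n_1)) \cdot F(G(n_2)). \]
The same concatenation argument, adapted to permutations with blanks (blanks are inert, so a difference in either half forces a difference in the concatenation), yields $F_\infty(G(n_1+n_2)) \geq F_\infty(G(n_1)) \, F_\infty(G(n_2))$. Fekete's lemma applied to $\log_2 F(G(n))$ and $\log_2 F_\infty(G(n))$ then forces both $\frac{1}{n}\log_2 F(G(n))$ and $\frac{1}{n}\log_2 F_\infty(G(n))$ to converge in $[0, \infty]$, coinciding with their respective suprema; in particular $\rho(G)$ and $\rho_\infty(G)$ are genuine limits, not merely $\limsup$'s.

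The direction $\rho(G) \leq \rho_\infty(G)$ is immediate from $F(G(n)) \leq F_\infty(G(n))$. For the reverse, fix $n_0$ and take a maximum-size family $\mathcal{F}_0$ of pairwise $G(n_0)$-different permutations with unlimited blanks, so $|\mathcal{F}_0| = F_\infty(G(n_0))$. Applying Lemma \ref{lem:recursion_no_blanks} with $G_0 = G(n_0)$ produces, for each $k \geq 1$, a family of $|\mathcal{F}_0|^k$ pairwise $G'$-different permutations of $V(G')$ (no blanks), where $G'$ is the disjoint union of $k$ copies of $G(n_0)$. Iterating the subgraph hypothesis gives $G' \subseteq G(kn_0)$, and Lemma \ref{lem:subgraph_is_less} then yields $F(G(kn_0)) \geq F_\infty(G(n_0))^k$. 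Dividing by $kn_0$ and letting $k\to\infty$, using that $\rho(G)$ has been promoted to a limit (so the value along the subsequence $kn_0$ equals $\rho(G)$), we obtain $\rho(G) \geq \frac{1}{n_0}\log_2 F_\infty(G(n_0))$; taking $\limsup$ over $n_0$ then yields $\rho(G) \geq \rho_\infty(G)$. Combining the two directions gives $\rho(G) = \rho_\infty(G)$ in $[0,\infty]$, so finiteness of one forces finiteness of the other.

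The main obstacle is bridging Lemma \ref{lem:recursion_no_blanks}, which concerns the very rigid sequence ``$k$ disjoint copies of $G_0$,'' with an arbitrary graph sequence $G(n)$ satisfying only the subgraph hypothesis. The hypothesis supplies exactly the flexibility required: iterated, it yields the subgraph inclusion $G' \subseteq G(kn_0)$, and Lemma \ref{lem:subgraph_is_less} transfers the family across. A secondary, routine step is verifying that Lemma \ref{lem:concatenation} extends to $F_\infty$, which is immediate since its proof never relies on the permutations being blank-free; this extension is what powers Fekete's lemma for $F_\infty$ and thereby the whole argument.
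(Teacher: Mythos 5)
Your proof is correct and uses essentially the same mechanism as the paper's: both take a maximum blank-allowing family on some $G(n_0)$, apply Lemma \ref{lem:recursion_no_blanks}, and use the hypothesis that $k$ disjoint copies of $G(n_0)$ embed in $G(kn_0)$ to transfer the bound to $\rho(G)$ (the paper phrases this as a contradiction, you argue directly; your Fekete/superadditivity step is valid but unnecessary, since the limsup over the subsequence $kn_0$ is automatically at most the full limsup). One imprecision worth noting: Lemma \ref{lem:recursion_no_blanks} does not literally hand you a blank-free family of size $|\mathcal{F}_0|^k$ on exactly $k$ copies for every $k$ --- at $k=1$ that would force $F(G(n_0)) \geq F_\infty(G(n_0))$, which fails in general (e.g.\ $F(M(2)) = 2$ while $F_\infty(M(2)) = 3$) --- it only yields the asymptotic bound $\rho \geq \frac{1}{n_0}\log_2 |\mathcal{F}_0|$, but that is all your argument actually needs once you let $k \to \infty$.
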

\begin{proof}
Clearly if $\rho_{\infty} (G)$ exists, then $\rho(G)$ exists and $\rho_{\infty} (G) \geq \rho(G)$, as $F_{\infty} (G(n)) \geq F(G(n))$ for all $n$. We now show by contradiction that if $\rho(G)$ exists, then $\rho_{\infty} (G)$ exists and $\rho(G) \geq \rho_{\infty} (G)$. Assume that $\rho(G)$ exists and is not $\infty$, but that $\rho_{\infty} (G) > \rho(G)$ or that $\rho_{\infty} (G) = \infty$. Then, in both of these cases, there exists an $N$ such that there is a family $\mathcal{F}$ of pairwise $G(N)$-different permutations of $V(G(N))$ with unlimited blanks, where $\frac{1}{N} \log_2 |\mathcal{F}| > \rho(G)$. However, by assumption the union of $k$ copies of $G(N)$ is a subgraph of $G(k N)$ for all positive integers $k$. Therefore, $\rho(G) \geq \frac{1}{N} \log_2 |\mathcal{F}|$ by Lemma \ref{lem:recursion_no_blanks}, which is a contradiction.
\end{proof}

Lemma \ref{lem:blanks_dont_help} shows that $\rho_{\infty} (M) = \rho(M).$ As $M(n)$ is bipartite, $\rho(M) \leq 1$, so $\rho_{\infty} (M) \leq 1$. Therefore $F_{\infty} (M(n))$ grows exponentially on the order of at most $2^n$. This bound was improved in \cite{korner_graph-different_2008}, where it was shown that $\sqrt{3}^n \leq F_{\infty} (M(n)) \leq 2^n$. The upper bound of $2^n$ was shown as part of the more general result that $F_{\infty} (G) \leq (\chi(G))^{|V(G)|}$, where $\chi(G)$ denotes the chromatic number of $G$. We use a different approach for bounding $F_{\infty} (M(n))$; we first bound $\alpha(H_{\mathcal{F}, M(2)})$ for families $\mathcal{F}$ of permutations of the vertices of $M(2)$, then we use this result to bound $\alpha(H_{\mathcal{F}, M(n)})$ for larger $n$. This approach helps determine $F_{\infty} (M(n))$ for small $n$ and provides a slightly stronger upper bound on $F_{\infty} (M(n))$ for all $n$ (better than $2^n$ by a constant factor). We also present some interesting constructions of families $\mathcal{F}$ with relatively good upper bounds on $\alpha(H_{\mathcal{F}, M(n)})$; these results mark progress towards potentially improving the lower bounds on $F_{\infty} (M(n))$.

\begin{lemma}
\label{lem:m2_ind_num}
Let $\mathcal{F}$ be a family of permutations of the vertices of $M(2)$ with $b$ blank spaces, and let $c = b + 2$ be the length of the permutations in $\mathcal{F}$. Then $$\alpha(H_{\mathcal{F}, M(2)}) \geq \frac{2^{c-2}}{2^c - 2} \cdot |\mathcal{F}|.$$
\end{lemma}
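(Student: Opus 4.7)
The plan is to think of each permutation $\sigma \in \mathcal{F}$ purely in terms of the two positions $p_1(\sigma), p_2(\sigma) \in \{1,\dots,c\}$ at which $\sigma$ places the two vertices of $M(2)$; the blank positions are irrelevant to the $M(2)$-difference relation. Since the only edge of $M(2)$ is the pair $\{1,2\}$, two permutations $\sigma,\pi$ are $M(2)$-different precisely when some coordinate $i$ satisfies $\{\sigma(i),\pi(i)\}=\{1,2\}$, i.e., precisely when $p_1(\sigma)=p_2(\pi)$ or $p_2(\sigma)=p_1(\pi)$.

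With this in mind, I would use a ``cut'' to separate the positions of the two vertices. For any subset $S\subseteq\{1,\dots,c\}$, call $\sigma$ \emph{$S$-good} if $p_1(\sigma)\in S$ and $p_2(\sigma)\notin S$. The key observation is that if $\sigma$ and $\pi$ are both $S$-good, then $p_1(\sigma)\in S$ while $p_2(\pi)\notin S$, so $p_1(\sigma)\neq p_2(\pi)$, and symmetrically $p_2(\sigma)\neq p_1(\pi)$; hence $\sigma$ and $\pi$ are not $M(2)$-different. Thus the set of $S$-good permutations is always an independent set in $H_{\mathcal{F},M(2)}$.

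It then remains to show that for some $S$, the number of $S$-good permutations is at least $\tfrac{2^{c-2}}{2^c-2}\cdot|\mathcal{F}|$. I would restrict attention to the $2^c-2$ subsets $S$ that are neither empty nor all of $\{1,\dots,c\}$, and double-count pairs $(\sigma,S)$ with $\sigma$ being $S$-good. For a fixed $\sigma$, the number of subsets $S$ with $p_1(\sigma)\in S$ and $p_2(\sigma)\notin S$ is exactly $2^{c-2}$, since the remaining $c-2$ coordinates may be chosen freely; moreover, every such $S$ is automatically nonempty and not full, so all $2^{c-2}$ choices lie in the restricted range. Summing over $\sigma\in\mathcal{F}$ gives $|\mathcal{F}|\cdot 2^{c-2}$ incidences, and averaging over the $2^c-2$ admissible subsets produces an $S$ with at least $\tfrac{2^{c-2}}{2^c-2}\cdot|\mathcal{F}|$ $S$-good permutations.

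There is really no serious obstacle here; the entire argument hinges on recognizing that the position pair $(p_1(\sigma),p_2(\sigma))$ fully controls the $M(2)$-difference relation, and on the slightly sharper averaging over the $2^c-2$ nontrivial subsets rather than all $2^c$ (a uniform coin-flip argument would only yield the weaker bound $|\mathcal{F}|/4$). The mild improvement from $1/4$ to $\tfrac{2^{c-2}}{2^c-2}$ is exactly what excluding $S=\emptyset$ and $S=\{1,\dots,c\}$ buys, and will matter for propagating this lemma into upper bounds on $F_\infty(M(n))$ for small $n$.
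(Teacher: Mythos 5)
Your proof is correct and is essentially the paper's own argument: your subsets $S$ are in bijection with the paper's length-$c$ strings of 1's and 2's (via $S=\{j: s(j)=1\}$), your ``$S$-good'' sets are exactly the string-indexed independent sets, and the double count of $2^{c-2}$ admissible subsets per permutation followed by averaging over the $2^c-2$ nontrivial choices is the same pigeonhole step.
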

\begin{proof}
Assume the vertices of $M(2)$ are labeled 1 and 2, so that all permutations in $\mathcal{F}$ consist of 1, 2, and $c-2$ blanks. We first observe that an independent set in $H_{\mathcal{F}, M(2)}$ cannot contain permutations $\pi$ and $\sigma$ such that $\pi(j) = 1$ and $\sigma(j) = 2$ for some position $j$. Therefore, an independent set $I$ in $H_{\mathcal{F}, M(2)}$ is characterized by a string $s$ of 1's and 2's of length $c$. A permutation $\pi \in \mathcal{F}$ is in $I$ only if $\pi(j) = s(j)$ for every position $j$ at which $\pi$ does not have a blank space.

There are $2^c$ possible labelings of the $c$ positions with 1's and 2's, but 2 of these (all 1's and all 2's) always correspond to empty independent sets. Therefore let $I_1, \dots, I_{2^c-2}$ be the $2^c-2$ independent sets of maximal size in $H_{\mathcal{F}, M(2)}$ corresponding to strings of 1's and 2's of length $c$. Each permutation $\pi \in \mathcal{F}$ belongs to exactly $2^{c-2}$ of these independent sets, as each of the $c-2$ blank spaces in $\pi$ may be labeled 1 or 2 in the string $s$. Therefore $$\sum_{i=1}^{2^c-2} |I_i| = 2^{c-2} \cdot |\mathcal{F}|.$$ It follows by the pigeonhole principle that there exists some $k$ for which $$|I_k| \geq \frac{2^{c-2}}{2^c - 2} \cdot |\mathcal{F}|.$$
\end{proof}
\begin{corollary}
\label{cor:m2_quarter}
$$\alpha(H_{\mathcal{F}, M(2)}) > \frac14 \cdot |\mathcal{F}|.$$
\end{corollary}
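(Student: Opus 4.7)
The plan is to derive this immediately from Lemma \ref{lem:m2_ind_num} by showing that the coefficient $\frac{2^{c-2}}{2^c - 2}$ is always strictly greater than $\frac{1}{4}$, regardless of the value of $c$ (where $c = b+2 \geq 2$ is the length of permutations in $\mathcal{F}$).

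First I would write $\frac{2^{c-2}}{2^c - 2} = \frac{2^c / 4}{2^c - 2}$. Then, since $2^c - 2 < 2^c$ for every positive integer $c$, the denominator is strictly smaller than $2^c$, so
\[
\frac{2^{c-2}}{2^c - 2} \;=\; \frac{2^c/4}{2^c - 2} \;>\; \frac{2^c/4}{2^c} \;=\; \frac{1}{4}.
\]
Multiplying both sides by $|\mathcal{F}|$ and combining with Lemma \ref{lem:m2_ind_num} gives $\alpha(H_{\mathcal{F}, M(2)}) \geq \frac{2^{c-2}}{2^c-2} \cdot |\mathcal{F}| > \frac{1}{4} \cdot |\mathcal{F}|$, as required.

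There is really no obstacle here; the corollary is a uniform-in-$c$ restatement of the lemma, obtained by dropping the $-2$ in the denominator of the bound. One small point worth noting in the write-up is that the degenerate case $b = 0$ (i.e., $c = 2$) is handled correctly: there the coefficient equals $\frac{1}{2} > \frac{1}{4}$, so the strict inequality holds trivially; for $c \geq 3$ the argument above applies verbatim.
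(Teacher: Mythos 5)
Your proof is correct and is exactly the argument the paper leaves implicit: the coefficient $\frac{2^{c-2}}{2^c-2}$ from Lemma \ref{lem:m2_ind_num} exceeds $\frac14$ because $2^c - 2 < 2^c$, uniformly in $c \geq 2$. Nothing further is needed.
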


The inequality in Lemma \ref{lem:m2_ind_num} is only significantly stronger than that in Corollary \ref{cor:m2_quarter} for families of permutations that are very short in length. It is therefore desirable to be able to consider families of permutations with as few blank spaces as possible. The following lemma shows that families of permutations with sufficiently many blank spaces can be condensed to equivalent families with fewer blank spaces.

\begin{lemma}
\label{lem:merge_cols}
Let $\mathcal{F} = \{\pi_1, \pi_2, \dots, \pi_p\}$ be a family of $p$ permutations of the vertices of $M(2)$ with $b$ blank spaces, and let $c = b + 2$ be the length of each permutation in $\mathcal{F}$. If $p < {c \choose 2}$, then there exists a family $\mathcal{F}' = \{\pi_1', \pi_2', \dots, \pi_p'\}$ of $p$ permutations of the vertices of $M(2)$ with $b-1$ blank spaces such that $H_{\mathcal{F}, M(2)}$ is a subgraph of $H_{\mathcal{F}', M(2)}$.
\end{lemma}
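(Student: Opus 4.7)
The plan is to merge two well-chosen columns into one, thereby shrinking the length from $c$ to $c-1$ (and the blank count from $b$ to $b-1$) without losing any edges of $H_{\mathcal{F},M(2)}$. I would call a pair of column positions $\{i,j\}$ \emph{blocked} if some permutation in $\mathcal{F}$ has non-blank entries in both columns $i$ and $j$. Each $\pi_k \in \mathcal{F}$ blocks exactly one such pair, namely the pair of positions carrying its $1$ and its $2$, so there are at most $p$ blocked pairs. Since $p < \binom{c}{2}$, some pair $\{i,j\}$ is unblocked: no permutation in $\mathcal{F}$ has non-blank entries in both column $i$ and column $j$.

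Having fixed such a pair $\{i,j\}$, I would construct each $\pi_k'$ from $\pi_k$ by deleting column $j$ and overwriting the entry in column $i$ with the (at most one) non-blank value appearing in $\{\pi_k(i),\pi_k(j)\}$, leaving the merged column blank when both entries were already blank. By the choice of $\{i,j\}$ this operation is well-defined, and each $\pi_k'$ is a permutation of the vertices of $M(2)$ of length $c-1$ with $b-1$ blanks, as required.

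The substantive verification is that $H_{\mathcal{F},M(2)}$ is a subgraph of $H_{\mathcal{F}',M(2)}$. Given any $M(2)$-different pair $\pi,\sigma \in \mathcal{F}$, I would fix a position $t$ with $\{\pi(t),\sigma(t)\} = \{1,2\}$. If $t \notin \{i,j\}$, the witnessing position survives the merge unchanged. If $t = i$, then $\pi(i)$ and $\sigma(i)$ are both non-blank, so unblockedness of $\{i,j\}$ forces $\pi(j) = \sigma(j) = \text{blank}$, and the merged column of $\pi',\sigma'$ inherits the pair $\{1,2\}$ from column $i$; the case $t = j$ is symmetric. In all cases $\pi'$ and $\sigma'$ are $M(2)$-different.

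I expect the main conceptual step to be the counting argument that locates an unblocked pair of columns; the rest is routine bookkeeping. What makes the counting work cleanly is that a permutation of $V(M(2))$ together with blanks has exactly two non-blank entries, so each permutation rules out only one column pair and the threshold $p < \binom{c}{2}$ is exactly what lets the pigeonhole go through.
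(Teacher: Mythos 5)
Your proposal is correct and follows essentially the same route as the paper: a pigeonhole count showing that some pair of columns is never simultaneously non-blank (each permutation, having exactly two non-blank entries, rules out exactly one pair), followed by merging those two columns. Your verification that edges of $H_{\mathcal{F},M(2)}$ survive the merge is more explicit than the paper's, which simply asserts that the merged family ``satisfies the desired properties,'' but the underlying argument is identical.
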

\begin{proof}
There are ${c \choose 2}$ pairs of positions $j_1, j_2$ in the permutations in $\mathcal{F}$. If $p = |\mathcal{F}| < {c \choose 2}$, then by the pigeonhole principle there must be some pair of positions $j_1, j_2$ ($1 \leq j_1 < j_2 \leq c$) such that each permutation in $\mathcal{F}$ has a blank space in at least one of these positions. Then for each $\pi_i$, let $\pi_i'$ be the permutation consisting of $\pi_i$ with the entry at position $j_2$ removed, and let $\pi_i'(j_1)$ take on the value of whichever of $\pi_i(j_1)$ or $\pi_i(j_2)$ is not a blank space. In other words, position $j_2$ was merged into position $j_1$ for each permutation $\pi_i$ to obtain $\pi_i'$. Then $\mathcal{F}' = \{\pi_1', \pi_2', \dots, \pi_p'\}$ satisfies the desired properties.
\end{proof}

We now apply Lemma \ref{lem:m2_ind_num} and Lemma \ref{lem:merge_cols} to determine the value of $F_{\infty} (M(4))$ and to improve the existing upper bound on $F_{\infty} (M(n))$ by a constant factor.

\begin{theorem}
\label{thm:m4_is_9}
$F_{\infty} (M(4)) = 9.$
\end{theorem}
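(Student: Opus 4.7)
The proof splits into the lower bound $F_\infty(M(4)) \geq 9$ and the upper bound $F_\infty(M(4)) \leq 9$.

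For the lower bound, I would first establish $F_\infty(M(2)) = 3$: the three permutations $(1,2,*), (*,1,2), (2,*,1)$ are pairwise $M(2)$-different, and the matching upper bound follows from Lemma \ref{lem:m2_ind_num} applied to a pairwise $M(2)$-different family with $\alpha = 1$, yielding $|\mathcal{F}| \leq (2^c-2)/2^{c-2} < 4$ for $c \geq 3$. Since $M(4)$ is the disjoint union of two copies of $M(2)$ on the vertex sets $\{1,2\}$ and $\{3,4\}$, concatenating two optimal $F_\infty(M(2))$-families (adapting Lemma \ref{lem:concatenation} to $F_\infty$ by padding with enough blanks) produces $3 \cdot 3 = 9$ pairwise $M(4)$-different permutations.

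For the upper bound, let $\mathcal{F}$ be pairwise $M(4)$-different with $b$ blank spaces, so each permutation has length $c = b+4$. For each $\pi \in \mathcal{F}$, let $\pi_{12}$ denote $\pi$ with the entries $3$ and $4$ replaced by blanks. Two permutations with the same $12$-pattern agree on the positions of $1$ and $2$ and are therefore not $12$-different; the $M(4)$-difference hypothesis forces them to be $34$-different, and $F_\infty(M(2)) = 3$ applied to their $34$-restrictions bounds each fiber by $3$. Hence $|\mathcal{F}| \leq 3 |\mathcal{F}_{12}'|$, where $\mathcal{F}_{12}'$ is the set of distinct $12$-patterns. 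Following the pattern of Lemma \ref{lem:m2_ind_num}, for each $s \in \{1,2\}^c$ with at least one $1$ and one $2$, define $I_s \subseteq \mathcal{F}_{12}'$ to be the patterns agreeing with $s$ wherever non-blank; the union of their $\mathcal{F}$-fibers is pairwise $34$-different of total size at most $3$, and since each $\pi$ lies in exactly $2^{c-2}$ such labelings, a double count yields $2^{c-2} |\mathcal{F}| \leq 3(2^c - 2)$ and hence $|\mathcal{F}| \leq 12 - 3 \cdot 2^{3-c}$.

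This pigeonhole alone only gives $|\mathcal{F}| \leq 11$ for large $c$, so the main obstacle is closing the gap to the sharp value $9$. The plan is to combine the above with an $M(4)$-analog of Lemma \ref{lem:merge_cols}: each permutation has $\binom{4}{2} = 6$ position pairs that are both non-blank, so two columns can be merged whenever $6|\mathcal{F}| < \binom{c}{2}$, reducing any putative counterexample with $|\mathcal{F}| \in \{10, 11\}$ to a bounded value of $c$ (roughly $c \leq 12$). The most promising refinement is symmetric: since the map $\pi \mapsto (\pi_{12}, \pi_{34})$ is injective, one expects the full $M(4)$-constraint to force both $|\mathcal{F}_{12}'| \leq 3$ and $|\mathcal{F}_{34}'| \leq 3$, which together yield $|\mathcal{F}| \leq 9$. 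I expect this joint combinatorial analysis, ruling out the remaining borderline configurations with $|\mathcal{F}| \in \{10, 11\}$, to be the crux of the argument.
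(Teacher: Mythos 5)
Your lower bound is fine and matches what the paper relies on: $F_{\infty}(M(2)) = 3$ together with concatenation across the two disjoint edges gives $9$. Your upper-bound setup is also essentially the paper's: restrict each permutation to the edge $\{1,2\}$ and to the edge $\{3,4\}$, observe that any set of permutations whose $12$-restrictions are mutually non-$M(2)$-different (an independent set in $H_{\mathcal{F}_1, M(2)}$) must have pairwise $M(2)$-different $34$-restrictions and hence size at most $F_{\infty}(M(2)) = 3$, and combine this with the averaging bound of Lemma \ref{lem:m2_ind_num} to conclude $|\mathcal{F}| < 12$.

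But the proof stops there. Ruling out $|\mathcal{F}| = 10$ (and hence $11$) is exactly the hard part, and your proposal does not contain an argument for it --- only the expectation that a ``joint combinatorial analysis'' will work. Moreover, the specific mechanism you suggest is wrong: the $M(4)$-constraint does not force the number of distinct $12$-patterns $|\mathcal{F}_{12}'|$ to be at most $3$; it only forces $\alpha(H_{\mathcal{F}_1, M(2)}) \leq 3$, i.e., every \emph{consistent} collection of $12$-patterns has total fiber size at most $3$. A family can have many distinct, pairwise $M(2)$-different $12$-patterns (the families $\mathcal{A}_c$ have $c(c-1)$ of them), so $|\mathcal{F}_{12}'| \leq 3$ is not available and the clean $3 \times 3 = 9$ count does not go through. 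What the paper actually does after reaching $|\mathcal{F}| \leq 11$ is: (i) note that for a putative family of size $10$ both restriction graphs must have independence number exactly $3$ and one of them must have at least $23$ edges (since $K_{10}$ has $45$); (ii) apply Lemma \ref{lem:merge_cols} to the \emph{restricted} family of $M(2)$-permutations rather than to the full $M(4)$-permutations, which is why the length drops to $c \leq 5$ instead of your $c \leq 12$; (iii) dispose of the $0$-, $1$-, and $3$-blank cases by hand (the last via an edge count showing at most $20 < 23$ edges); and (iv) settle the remaining $2$-blank case by a brute-force computer search. That final computational step is not something your proposed ``symmetric refinement'' is known to replace, so as written your argument has a genuine gap precisely at the step that distinguishes $9$ from $11$.
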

\begin{proof}%[Computer assisted proof]
We show that a family of 10 permutations of $V(M(2))$ with unlimited blank spaces cannot be $M(2)$-different. Specifically, it suffices to show that there is no family $\mathcal{F}_1$ of 10 permutations of the vertices of $M(2)$ with unlimited blanks such that $\alpha(H_{\mathcal{F}_1, M(2)}) \leq 3$ and $|E(H_{\mathcal{F}_1, M(2)})| > 22$. To see this, label the vertices on the two edges in $M(4)$ $(1, 2)$ and $(3, 4)$ respectively. Then, for any family $\mathcal{F} = \{\pi_1, \dots, \pi_{10}\}$ of 10 permutations of the vertices of $M(4)$ with unlimited blanks, let $\mathcal{F}_1 = \{\sigma_{1, 1}, \dots, \sigma_{1, 10}\}$ be the family $\mathcal{F}$ with all 3's and 4's replaced by blank spaces, and let $\mathcal{F}_2 = \{\sigma_{2, 1}, \dots, \sigma_{2, 10}\}$ be the family $\mathcal{F}$ with all 1's and 2's replaced by blank spaces. By this definition, $(\pi_{i_1}, \pi_{i_2}) \in E(H_{\mathcal{F}, M(4)})$ if and only if $(\sigma_{1, i_1}, \sigma_{1, i_2}) \in E(H_{\mathcal{F}_1, M(2)})$ or $(\sigma_{2, i_1}, \sigma_{2, i_2}) \in E(H_{\mathcal{F}_2, M(2)})$. Therefore, if $S \subseteq \{1, \dots, 10\}$ and if $\{\sigma_{1, i} : i \in S\}$ is an independent set in $H_{\mathcal{F}_1, M(2)}$, then $\{\sigma_{2, i} : i \in S\}$ must be a clique in $H_{\mathcal{F}_2, M(2)}$ in order for $H_{\mathcal{F}, M(2)}$ to be complete; the same applies for independent sets in $H_{\mathcal{F}_2, M(2)}$ and cliques in $H_{\mathcal{F}_1, M(2)}$. Because the largest clique in both $H_{\mathcal{F}_1, M(2)}$ and in $H_{\mathcal{F}_2, M(2)}$ has order at most $F_{\infty} (M(2)) = 3$, the independence number of both graphs must be 3 (it cannot be less than 3 by Lemma \ref{lem:m2_ind_num}). Furthermore, the complete graph on 10 vertices has 45 edges, so either $H_{\mathcal{F}_1, M(2)}$ or $H_{\mathcal{F}_2, M(2)}$ must have at least 23 edges in order for $H_{\mathcal{F}, M(2)}$ to be complete.

As ${5 \choose 2} = 10$, it is only necessary to consider families with at most 3 blank spaces by Lemma \ref{lem:merge_cols}. The case of 0 blanks is trivial; for families $\mathcal{F}_1$ of permutations of the vertices of $M(2)$ with 1 blank space, note that by Lemma \ref{lem:m2_ind_num}, $$\alpha(H_{\mathcal{F}_1, M(2)}) \geq \frac{2^{3 - 2}}{2^3 - 2} \cdot 10 = \frac{10}{3} > 3.$$ If $\mathcal{F}_1$ has 3 blank spaces, then the permutations have length 5, so each of the 10 pairs of positions $j_1, j_2$ must correspond to the 1 and the 2 of some permutation in $\mathcal{F}_1$; otherwise the family could be condensed by Lemma \ref{lem:merge_cols}. Therefore for each $1 \leq j \leq 5$, exactly 4 permutations in $\mathcal{F}_1$ do not have a blank space at position $j$. Among these 4 permutations, there are at most $2 \cdot 2 = 4$ pairs of $M(2)$-different permutations $(\pi_{i_1}, \pi_{i_2})$ which correspond to edges in $H_{\mathcal{F}_1, M(2)}$. Therefore $H_{\mathcal{F}_1, M(2)}$ has at most $5 \cdot 4 = 20$ edges. The only remaining case is when the permutations have 2 blank spaces. We used a brute force computer search for this case, and found that no 10 permutations of the vertices of $M(2)$ with 2 blanks has independence number 3.
\end{proof}
\begin{corollary}
\label{cor:m4_value}
$F_{\infty} (M(n)) < 9 \cdot 2^{n-4}$ for even $n > 4$.
\end{corollary}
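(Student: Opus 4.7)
The plan is to prove Corollary \ref{cor:m4_value} by induction on even $n \geq 6$ along the decomposition $M(n) = M(n-2) + M(2)$, in the spirit of the splitting argument used in the proof of Theorem \ref{thm:m4_is_9} but now letting Corollary \ref{cor:m2_quarter} do the work on the $M(2)$ side. The heart of the argument is the recursion
\[
F_{\infty}(M(n)) < 4\, F_{\infty}(M(n-2)) \quad \text{for every even } n \geq 6,
\]
which, combined with the base value $F_{\infty}(M(4)) = 9$ from Theorem \ref{thm:m4_is_9}, immediately yields $F_{\infty}(M(n)) < 9 \cdot 2^{n-4}$.

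To establish the recursion I would take any family $\mathcal{F}$ of pairwise $M(n)$-different permutations of $V(M(n))$ with unlimited blanks and split each $\pi \in \mathcal{F}$ along $M(n) = M(n-2) + M(2)$: let $\mathcal{F}_1$ be the family obtained by replacing the entries belonging to the $M(n-2)$ component by blanks, and let $\mathcal{F}_2$ be the analogous family for the $M(2)$ component. Exactly as in the proof of Theorem \ref{thm:m4_is_9}, for any two indices $i \neq j$ the $M(n)$-differentness of $\pi_i$ and $\pi_j$ forces either the $\mathcal{F}_1$-projections to be $M(2)$-different or the $\mathcal{F}_2$-projections to be $M(n-2)$-different. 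Applying Corollary \ref{cor:m2_quarter} to $\mathcal{F}_1$ then produces an independent set $I$ in $H_{\mathcal{F}_1, M(2)}$ with $|I| > |\mathcal{F}|/4$; by the dichotomy, the $\mathcal{F}_2$-projections indexed by $I$ are pairwise $M(n-2)$-different, so $|I| \leq F_{\infty}(M(n-2))$. Combining the two inequalities gives $|\mathcal{F}|/4 < F_{\infty}(M(n-2))$, which is the claimed recursion.

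The induction is then routine: for $n = 6$ the recursion with $F_{\infty}(M(4)) = 9$ gives $F_{\infty}(M(6)) < 4 \cdot 9 = 9 \cdot 2^{2}$, and assuming inductively that $F_{\infty}(M(n-2)) \leq 9 \cdot 2^{n-6}$ the recursion yields $F_{\infty}(M(n)) < 4 \cdot 9 \cdot 2^{n-6} = 9 \cdot 2^{n-4}$. I do not see any serious obstacle in this plan. The strict inequality in the final bound is inherited for free from the strict ``$>$'' in Corollary \ref{cor:m2_quarter}, so no extra care is needed at each step of the induction. The only minor technical point is that the projections $\mathcal{F}_1$ and $\mathcal{F}_2$ need to be treated inside the $F_{\infty}$-framework; this is handled by the remark following (\ref{eq:chromatic_upper_bound}) that $F_{\infty}(G) = F_b(G)$ for all sufficiently large $b$, so the entire argument can be phrased with a uniform, finite blank count.
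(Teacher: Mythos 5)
Your proposal is correct and follows essentially the same route as the paper: both establish the recursion $F_{\infty}(M(n)) < 4\,F_{\infty}(M(n-2))$ by projecting onto one edge and the remaining $M(n-2)$, applying the $\frac{1}{4}$ independence-number bound (Corollary \ref{cor:m2_quarter}) to the $M(2)$ projection, and noting that the corresponding set must be a clique in $H_{\mathcal{F}_2, M(n-2)}$, then combine with $F_{\infty}(M(4)) = 9$. No substantive differences.
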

\begin{proof}
It suffices to show that for even $n > 4$, $F_{\infty} (M(n)) < 4 \cdot F_{\infty} (M(n-2))$. To show this, we use the same method of separating out the independent edges that we used in Theorem \ref{thm:m4_is_9}. Label the vertices of the $n/2$ edges in $M(n)$ $(1,2), (3,4), \dots, (n-1,n)$. Let $\mathcal{F}$ be some family of pairwise $M(n)$-different permutations of the vertices of $M(n)$ with unlimited blank spaces. Let $\mathcal{F}_1$ be the family $\mathcal{F}$ with all numbers other than 1's and 2's replaced with blank spaces, and let $\mathcal{F}_2$ be the family $\mathcal{F}$ with all 1's and 2's replaced with blank spaces. (By this definition, $\mathcal{F}_1$ contains permutations of the vertices of $M(2)$ and $\mathcal{F}_2$ contains permutations of the vertices of $M(n-2)$). Because any independent set in $H_{\mathcal{F}_1, M(2)}$ must correspond to a clique of equal size in $H_{\mathcal{F}_2, M(n-2)}$, the clique number of $H_{\mathcal{F}_2, M(n-2)}$ must be at least $\alpha(H_{\mathcal{F}_1, M(2)}) > \frac14 \cdot |\mathcal{F}|$. By definition, $F_{\infty} (M(n-2))$ is an upper bound on the clique number of $H_{\mathcal{F}_2, M(n-2)}$, so $$F_{\infty} (M(n-2)) > \frac14 \cdot |\mathcal{F}|.$$ As this inequality holds for all pairwise $M(n)$-different families $\mathcal{F}$ of permutations of the vertices of $M(n)$ with unlimited blanks, it must be that $$F_{\infty} (M(n-2)) > \frac14 \cdot F_{\infty} (M(n)).$$ % Uncentering this equation saves a lot of spaces.
\end{proof}

To conclude this section we present results which were motivated by the problem of improving the lower bound on $F_{\infty} (M(n))$. We first observe that there exist families $\mathcal{F}$ of permutations of the vertices of $M(2)$ such that $\alpha(H_{\mathcal{F}, M(2)})$ is very close to $\frac14 \cdot |\mathcal{F}|$. Specifically, for any integer $c \geq 2$, let $\mathcal{A}_c$ be the family of the $c(c-1)$ distinct permutations of the vertices of $M(2)$ with $c-2$ blank spaces. Let $s$ be a string of 1's and 2's of length $c$ characterizing an independent set $I$ in $H_{\mathcal{A}_c, M(2)}$. If $s$ has $x$ 1's and $y$ 2's, then $|I| = xy$ by the definition of $\mathcal{F}$. Therefore the size of the largest independent set in $H_{\mathcal{A}_c, M(2)}$ is $\alpha(H_{\mathcal{A}_c, M(2)}) = \lfloor c/2 \rfloor \cdot \lceil c/2 \rceil$. It follows that 
\begin{equation}
\label{eq:upper_bound_reached}
\lim_{c \rightarrow \infty} \frac{\alpha(H_{\mathcal{A}_c, M(2)})}{|\mathcal{A}_c|} = \lim_{c \rightarrow \infty} \frac{\lfloor c/2 \rfloor \cdot \lceil c/2 \rceil}{c(c-1)} = \frac14.
\end{equation}
This construction shows that the bound in Lemma \ref{lem:m2_ind_num} is nearly optimal. We now apply the ideas in Lemma \ref{lem:m2_ind_num} and in (\ref{eq:upper_bound_reached}) to get an interesting result.

Let $\mathcal{F}$ be a family of $p$ pairwise $M(n)$-different permutations of the vertices of $M(n)$ with unlimited blank spaces, and once again label the vertices of the edges in $M(n)$ $(1,2), (3,4), \dots, (n-1, n)$. Let $\mathcal{E}_2, \mathcal{E}_4, \dots, \mathcal{E}_n$ be defined such that $\mathcal{E}_k = \{\sigma_{k, 1}, \dots, \sigma_{k, p}\}$ consists of the family $\mathcal{F}$ with all non-blank entries other than $k-1$ and $k$ replaced by blank spaces in each permutation. Let $\mathcal{F}_0 = \{\pi_{0, 1}, \dots, \pi_{0, p}\}$ be a family of $p$ empty permutations (or permutations of the vertices of $M(0)$). It follows that $H_{\mathcal{F}_0, M(0)}$ is empty and $\alpha(H_{\mathcal{F}_0, M(0)}) = p$. Then let $\mathcal{F}_2 = \{\pi_{2, 1}, \dots, \pi_{2, p}\}$ be defined so that $\pi_{2, j}$ consists of $\pi_{0, j}$ concatenated to $\sigma_{2, j}$, and in general, let $\mathcal{F}_k = \{\pi_{k, 1}, \dots, \pi_{k, p}\}$ be such that $\pi_{k, j}$ consists of $\pi_{k-2, j}$ concatenated to $\sigma_{k, j}$. (This definition is such that $H_{\mathcal{F}_n, M(n)} = H_{\mathcal{F}, M(n)}$.) Note that for any positive even $k$ and for any indices $i_1$ and $i_2$, $(\pi_{k, i_1}, \pi_{k, i_2}) \in E(H_{\mathcal{F}_k, M(k)})$ if and only if either $(\pi_{k-2, i_1}, \pi_{k-2, i_2}) \in E(H_{\mathcal{F}_{k-2}, M(k-2)})$ or $(\sigma_{k, i_1}, \sigma_{k, i_2}) \in E(H_{\mathcal{E}_k, M(2)})$. It follows by Lemma \ref{lem:m2_ind_num} that for any independent set $I$ in $H_{\mathcal{F}_{k-2}, M(k-2)}$, there exists an independent set $I'$ in $H_{\mathcal{F}_k, M(k)}$, which is a subset of $I$, such that $|I'| > \frac14 \cdot |I|$. Therefore $\alpha(H_{\mathcal{F}_k, M(k)}) > \frac14 \cdot \alpha(H_{\mathcal{F}_{k-2}, M(k-2)})$, so by induction $$\alpha(H_{\mathcal{F}_k, M(k)}) > \frac{p}{2^k}$$ for all positive even $k$. As shown in (\ref{eq:upper_bound_reached}), for $k = 2$ there exist families $\mathcal{F}_2$ (namely, $\mathcal{A}_c$ for large $c$) such that $\alpha(H_{\mathcal{F}_2, M(2)})$ is arbitrarily close to $\frac{1}{2^k} = \frac14$. However, if $p \approx 2^n$, then $\alpha(H_{\mathcal{F}_k, M(k)})$ must be approximately $\frac{p}{2^k}$ for all $1 \leq k \leq n$, as $\alpha(H_{\mathcal{F}_n, M(n)}) = 1$. More generally, if $\rho_{\infty}(M) \geq \frac12 \log_2 a$ for some constant $a$, then there must exist families $\mathcal{E}_2, \dots, \mathcal{E}_n$ such that their corresponding families $\mathcal{F}_0, \dots, \mathcal{F}_n$ satisfy $\alpha(H_{\mathcal{F}_k, M(k)}) \approx \frac{p}{a^{k/2}}$ and $n \approx \log_{\sqrt{a}} p$ (loosely speaking). Below, we present a construction which partially answers this question by providing families $\mathcal{E}_2, \dots, \mathcal{E}_l$ such that for certain $a > 3$, $\alpha(H_{\mathcal{F}_k, M(k)})$ is within a constant factor of $\frac{p}{a^{k/2}}$ for $0 \leq k \leq l$, where $l$ grows logarithmically as a function of $p$ (but slower than $\log_{\sqrt{a}} p$). We later explain how this result could potentially be extended to improve the lower bound on $\rho_{\infty} (M)$.

\begin{theorem}
\label{thm:reducing_ind_num}
Let $\mathcal{A}$ be some family of permutations of the vertices of $M(2)$ with unlimited blank spaces, and let $p$ be some positive integer. Let $l = 2 \cdot \lceil \log_{|\mathcal{A}|} p \rceil$. Then there exists a family $\mathcal{F}_l$ of $p$ permutations of $M(l)$ such that $$\alpha(H_{\mathcal{F}_l, M(l)}) \leq P \cdot a^{l/2},$$ where $P$ is the least power of $|\mathcal{A}|$ not less than $p$ and $a = \dfrac{\alpha(H_{\mathcal{A}, M(2)})}{|\mathcal{A}|}$.
\end{theorem}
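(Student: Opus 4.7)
The plan is to realize $\mathcal{F}_l$ as (a subset of) a Cartesian-product construction along the $l/2$ disjoint edges of $M(l)$, and then bound $\alpha(H_{\mathcal{F}_l, M(l)})$ using the resulting product structure of the associated graph.

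First I would label the edges of $M(l)$ as $\{2j-1,2j\}$ for $j=1,\dots,l/2$, and write $\mathcal{A}=\{\alpha_1,\dots,\alpha_{|\mathcal{A}|}\}$. For each block index $j$, let $\alpha_i^{(j)}$ denote the relabeling of $\alpha_i$ in which $1\mapsto 2j-1$ and $2\mapsto 2j$ (with blanks left alone). For every tuple $(i_1,\dots,i_{l/2})\in\{1,\dots,|\mathcal{A}|\}^{l/2}$, I would form the permutation obtained by concatenating $\alpha_{i_1}^{(1)}\alpha_{i_2}^{(2)}\cdots\alpha_{i_{l/2}}^{(l/2)}$, and take $\mathcal{G}$ to be the family of all $|\mathcal{A}|^{l/2}=P$ such permutations. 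Since $l/2=\lceil\log_{|\mathcal{A}|}p\rceil$ forces $P\ge p$, I can then let $\mathcal{F}_l$ be any $p$-element subset of $\mathcal{G}$.

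The key structural observation is that, because the blocks occupy disjoint vertex sets and every edge of $M(l)$ lies inside a single block, two permutations in $\mathcal{G}$ are $M(l)$-different if and only if for some block $j$ the underlying pair $\alpha_{i_j},\alpha_{i'_j}$ is $M(2)$-different in $\mathcal{A}$. In graph-theoretic language, $H_{\mathcal{G},M(l)}$ is exactly the disjunctive (co-normal) product of $l/2$ copies of $H_{\mathcal{A},M(2)}$. Consequently, if $S\subseteq\mathcal{G}$ is independent in $H_{\mathcal{G},M(l)}$, then its projection $S_j$ to coordinate $j$ must index an independent set of $H_{\mathcal{A},M(2)}$, so $|S_j|\le\alpha(H_{\mathcal{A},M(2)})$. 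Since $S\subseteq S_1\times\cdots\times S_{l/2}$, the standard product bound yields
$$|S|\le\prod_{j=1}^{l/2}|S_j|\le\alpha(H_{\mathcal{A},M(2)})^{l/2}=(a\cdot|\mathcal{A}|)^{l/2}=P\cdot a^{l/2}.$$
Restricting from $\mathcal{G}$ to its subset $\mathcal{F}_l$ only shrinks the independence number, giving $\alpha(H_{\mathcal{F}_l,M(l)})\le P\cdot a^{l/2}$.

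No step looks genuinely hard; the main thing to be careful about is verifying that $M(l)$-difference of concatenated permutations truly factors as a disjunction over blocks, so that the block decomposition really does turn $H_{\mathcal{G},M(l)}$ into a disjunctive product. This relies on the fact that edges of $M(l)$ never cross block boundaries and that the relabeling $\alpha_i\mapsto\alpha_i^{(j)}$ preserves the (non)existence of an edge-forming coincidence at each position. Once that is pinned down, the rest is the textbook computation of the independence number of a disjunctive product.
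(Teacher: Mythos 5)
Your proposal is correct and is essentially the paper's own construction: the paper's repeating pattern of $A_1,\dots,A_q$ in blocks of size $q^{k/2-1}$ is exactly the base-$|\mathcal{A}|$ digit encoding of your tuples $(i_1,\dots,i_{l/2})$, so $\mathcal{F}_l$ is the same concatenated product family. Your projection argument through the disjunctive product is in fact a cleaner and more rigorous justification of the independence-number bound than the paper's, which merely asserts that a maximum independent set refines coordinatewise via independent sets $\mathcal{B}_k$ of $H_{\mathcal{A},M(2)}$; you also handle the passage from $P$ permutations down to $p$ explicitly, which the paper glosses over.
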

\begin{proof}
Let $q = |\mathcal{A}|$ and let $\mathcal{A} = \{A_1, A_2, \dots, A_q\}$. For even $k$ where $2 \leq k \leq l$, let $\mathcal{E}_k = \{\sigma_{k, 1}, \dots, \sigma_{k, P}\}$ consist of the pattern
\begin{equation}
\underbrace{A_1, \dots, A_1}_{q^{k/2-1}}, \underbrace{A_2, \dots, A_2}_{q^{k/2-1}} \; \cdots \; \underbrace{A_q, \dots, A_q}_{q^{k/2-1}}
\nonumber
\end{equation}
repeated $\frac{P}{q^{k/2}}$ times. Let $\mathcal{F}_k = \{\pi_{k, 1}, \dots, \pi_{k, P}\}$ be defined the same before: $\pi_{k, j} = \pi_{k-2, j} \; \sigma_{k, j}$.

It remains to be shown that $\alpha(H_{\mathcal{F}_l, M(l)}) = P \cdot a^{l/2}$. We first observe that any independent set with indices $I_l$ of maximum size in $H_{\mathcal{F}_l, M(l)}$ is constructed in the following manner. Let $I_0 = \{1, \dots, P\}$ represent the indices of the independent set $\pi_{0, 1}, \dots, \pi_{0, P}$ in $H_{\mathcal{F}_0, M(0)}$. For each $2 \leq k \leq l$, choose some independent set $\mathcal{B}_k$ in $H_{\mathcal{A}, M(2)}$ of size $\alpha(H_{\mathcal{A}, M(2)})$. Then let $I_k = \{j \in I_{k-2} : \sigma_{k, j} \in \mathcal{B}_k\}$. By this construction, $\alpha(H_{\mathcal{A}, M(2)})$ out of every $q$ elements of $I_{k-2}$ will be in $I_k$. Therefore by induction, as $I_k$ is of maximum size by assumption, $$\alpha(H_{\mathcal{F}_k, M(k)}) = |I_k| = P \cdot (\alpha(H_{\mathcal{A}, M(2)}) / q)^{k/2} = P \cdot a^{k/2}$$ for all even $0 \leq k \leq l$.
\end{proof}

\section{Conclusion}
\label{sec:conclusion}
%Outline:
%\begin{itemize}
%\item Present future work on matching, with additional methods we may use.
%\item Conclude with implications of our work.
%\end{itemize}
In this paper, we develop new methods for bounding the maximum size of a family of pairwise graph-different permutations for various bipartite graphs. For specific non-complete bipartite graphs $G$ with vertex subsets of size $a$ and $b$, we show that the upper bound on $F(G)$ of $F(K_{a,b})$ is tight. We show that if $G(n)$ is any balanced bipartite graph on $n$ vertices with minimum degree $n/2 - o(n)$, then $F(G(n))$ grows on the same exponential order as $F(K_{\lfloor n/2 \rfloor, \lceil n/2 \rceil})$ when $n \rightarrow \infty$. We also show that this growth is achieved for certain much sparser balanced bipartite graphs. We present several new bounds on $F_{\infty}$ for the matching graph $M(n)$. Specifically, we determine the exact value of $F_{\infty} (M(4))$, and improve the upper bound on $F_{\infty} (M(n))$. Our new methods and bounds make potential progress towards determining the value of $F(P_n)$.

%We believe our approach to bounding $F_{\infty} (M(n))$ may be generalizable to further improving the current bounds, and ultimately determining the asymptotic behavior of $F_{\infty} (M(n))$. For example, if we can extend the construction in the proof of Theorem \ref{thm:reducing_ind_num} to reduce the independence number of $H_{\mathcal{F}_n, M(n)}$ to 1 for some sufficiently small $n$, then the lower bound on $\rho_{\infty}(M)$ would be improved. Because the matching graph is a subgraph of many other graphs, lower bounds on $\rho_{\infty} (M) = \rho(M)$ are valuable for bounding $F(G)$ asymptotically for many $G$ due to Lemma \ref{lem:subgraph_is_less}.
%
%We will also continue working towards determining the largest possible $\Delta(n)$ such that $$\limsup_{n \rightarrow \infty} \frac{1}{n} \log_2 F(n, \lfloor n/2 \rfloor, \Delta(n)) = 1.$$ Theorem \ref{thm:big_deg_comp} showed that $\Delta(n) = o(n)$ satisfies this equation. Because it is conjectured that $F(P_n) = F(K_{\lfloor n/2 \rfloor, \lceil n/2 \rceil})$, and the path has maximum degree 2, it may be possible to increase our bound; all bipartite graphs $G(n)$ we have found such that $\limsup_{n \rightarrow \infty} \frac{1}{n} \log_2 F(G(n)) < 1$ have been quite sparse. Determining the largest such $\Delta(n)$ would provide information on bipartite graphs $G(n)$ such that $G(n)$ does not grow exponentially on the order of $2^n$, and could therefore mark progress towards determining the asymptotic growth of $F(P_n)$.

\section{Acknowledgements}
We would like to thank Dr. Tanya Khovanova for her helpful comments and suggestions on the paper. We would also like to thank the MIT PRIMES program for providing us with the opportunity to perform this research.

%\newpage
\bibliographystyle{ieeetr}
\singlespacing
\bibliography{library.bib}
\end{document}